\theoremstyle{plain}
\newtheorem{theorem}                {Theorem}      [section]
\newtheorem{proposition}  [theorem]  {Proposition}
\newtheorem{lemma}        [theorem]  {Lemma}
\theoremstyle{definition}
\newtheorem{remark}       [theorem]  {Remark}
\numberwithin{equation}{section}
\def \R{{\mathbb R}}
\def \s{{\mathbb S}}
\DeclareMathOperator{\cst}{constant}
\DeclareMathOperator{\grad}{grad}
\DeclareMathOperator{\trace}{trace}
\DeclareMathOperator{\Div}{div}
\DeclareMathOperator{\ricci}{Ricci}
\numberwithin{equation}{section}
\begin{document}

\title[]{Proper Biconservative immersions into the Euclidean space}

\author{S.~Montaldo}
\address{Universit\`a degli Studi di Cagliari\\
Dipartimento di Matematica e Informatica\\
Via Ospedale 72\\
09124 Cagliari, Italia}
\email{montaldo@unica.it}

\author{C.~Oniciuc}
\address{Faculty of Mathematics\\ ``Al.I. Cuza'' University of Iasi\\
Bd. Carol I no. 11 \\
700506 Iasi, ROMANIA}
\email{oniciucc@uaic.ro}

\author{A.~Ratto}
\address{Universit\`a degli Studi di Cagliari\\
Dipartimento di Matematica e Informatica\\
Viale Merello 93\\
09123 Cagliari, Italia}
\email{rattoa@unica.it}

\begin{abstract}
In this paper,  using the framework of equivariant differential geometry, we study proper $SO(p+1) \times SO(q+1)$-invariant biconservative hypersurfaces into the Euclidean space $\R^n$ ($n=p+q+2$) and proper $SO(p+1)$-invariant biconservative hypersurfaces into the Euclidean space $\R^n$ ($n=p+2$).
Moreover, we show that, in these two classes of invariant families, there exists no proper biharmonic immersion.
\end{abstract}

\subjclass[2000]{58E20}

\keywords{Biharmonic maps, biconservative maps, biharmonic submanifols, equivariant differential geometry}

\thanks{Work supported by: PRIN 2010/11 -- Variet\`a reali e complesse: geometria, topologia e analisi armonica N. 2010NNBZ78 003 -- Italy; GNSAGA -- INdAM, Italy;
Romanian National Authority for Scientific Research, CNCS -- UEFISCDI, project number PN-II-RU-TE-2011-3-0108}

\maketitle

\section{Introduction}\label{intro}

A hypersurface $M^{n-1}$ in an $n$-dimensional Riemannian manifold $N^{n}$ is called {\it biconservative} if
\begin{equation}\label{eq-def-biconservative-space}
2 A(\grad f)+ f \grad f=2 f \ricci^N(\eta)^{\top}\,,
\end{equation}
where $A$ is the shape operator, $f=\trace A$ is $(n-1)$ times the mean curvature function and $\ricci^N(\eta)^{\top}$ is the tangent  component of the Ricci curvature of $N$ in the direction of the unit normal $\eta$ of $M$ in $N$.\\

As we shall detail in Section~\ref{sec:stress-energy-tensor}, biconservative  hypersurfaces are those with divergence-free stress-bienergy tensor and can be characterized as  the hypersurfaces with vanishing  tangent component of the bitension field
\begin{equation}\label{bitensionfield}
 \tau_2(\varphi) = - \Delta \tau(\varphi)- \trace R^N(d \varphi, \tau(\varphi)) d \varphi  \,\, .
\end{equation}

To give sense to \eqref{bitensionfield}, we recall that a smooth map $\varphi:(M,g)\to(N,h)$ is a {\it harmonic map}  if it is a critical point of the {\em energy} functional
\begin{equation}\label{energia}
E(\varphi)=\frac{1}{2}\int_{M}\,|d\varphi|^2\,dv_g \,\, ,
\end{equation}
of which the Euler-Lagrange equation is $\tau(\varphi)={\trace} \, \nabla d \varphi =0$.
A natural generalization of harmonic maps are the so-called {\it biharmonic maps}: these maps are the critical points of the bienergy functional (as suggested by Eells--Lemaire \cite{EL83})
\begin{equation}\label{bienergia}
    E_2(\varphi)=\frac{1}{2}\int_{M}\,|\tau (\varphi)|^2\,dv_g \,\, .
\end{equation}
In \cite{Jiang} G.~Jiang showed that the Euler-Lagrange equation associated to $E_2(\varphi)$ is given by $\tau_2(\varphi) =0$.\\

An immersed sub\-mani\-fold into a Riemannian manifold $(N,h)$ is called a {\it biharmonic sub\-mani\-fold} if the immersion is a biharmonic map.
Thus biharmonic hypersurfaces are biconservative. \\

In this paper we consider biconservative hypersurfaces in the Euclidean space $\R^n$. In this case \eqref{eq-def-biconservative-space} becomes
\begin{equation}\label{eq-def-biconservative-space-form}
2 A(\grad f)+ f \grad f=0\,.
\end{equation}
From \eqref{eq-def-biconservative-space-form} we see immediately that CMC hypersurfaces are biconservative. Thus our inte\-rest will be
on biconservative hypersurfaces which are not CMC: we shall call them \emph{proper} biconservative.

In \cite{CMOP2013} and \cite{HasVla95} the authors have classified proper biconservative surfaces in $\R^3$ proving that they must be of revolution.
In higher dimensional Euclidean spaces the situation is rather different, as shown in \cite{HasVla95}, where the authors have found other families
of biconservative hypersurfaces in $\R^4$ and, in particular, they have shown that some of them are $SO(1)\times SO(1)$-invariant. Recently, in \cite{Chen-munteanu}, the authors proved that a $\delta(2)$-ideal biconservative hypersurface in Euclidean space $\R^n$ ($n \geq 3$) is either minimal or a spherical hypercylinder. Moreover, we would like to mention that there is parallel study of biconservative hypersurfaces in semi-Riemannian geometry  (see,
for example, \cite{Fu}). \\

Our goal is to give a detailed description of biconservative $SO(p+1)\times SO(q+1)$-invariant hypersurfaces in $\R^n$, $n=p+q+2$, and biconservative $SO(p+1)$-invariant hypersurfaces in $\R^n$, $n=p+2$, using the framework of equivariant differential geometry in the spirit of \cite{ER}, \cite{Hsiang}  and
 \cite{MR}. Our analysis will lead us to the following main results (Theorem \ref{teoremabiconservativeshort} is an immediate consequence of Theorem \ref{teoremabiconservative} which we shall prove in section \ref{section-biconservative-existence}):\\

\begin{theorem}\label{teoremabiconservativeshort}
There exists an infinite family of proper $SO(p+1)
\times SO(q+1)$-invariant biconservative hypersurfaces (cones) in $\R^n$ ($n=p+q+2$). Their corresponding profile curves $\gamma(s)$  tend asymptotically to the profile of a minimal cone. If $p+q \leq 17$, at infinity the profile curves $\gamma$ intersect the profile of the minimal cone at infinitely many points, while, if $p+q \geq 18$,  at infinity the profile curves $\gamma$ do not intersect the profile of the minimal cone.
None of these hypersurfaces is complete.
\end{theorem}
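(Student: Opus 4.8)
The natural line of attack is the equivariant reduction, followed by a phase–plane analysis; the threshold $p+q=17$ versus $p+q=18$ will turn out to be the integer part of a root of a quadratic discriminant.

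\textbf{Reduction to a planar system.} Write an $SO(p+1)\times SO(q+1)$-invariant hypersurface of $\R^{n}=\R^{p+1}\times\R^{q+1}$ as $\varphi(s,w_{1},w_{2})=(x(s)w_{1},\,y(s)w_{2})$, with $(w_{1},w_{2})\in S^{p}\times S^{q}$ and profile curve $\gamma(s)=(x(s),y(s))$ lying in the open first quadrant and parametrised by arc length; set $\dot x=\cos\theta(s)$, $\dot y=\sin\theta(s)$. Equivariance makes the shape operator diagonal with eigenvalues $\kappa_{0}=\dot\theta$ (simple, along $\partial_{s}$), $\dot y/x$ (multiplicity $p$) and $-\dot x/y$ (multiplicity $q$); hence $f=\trace A=\dot\theta+p\,\dot y/x-q\,\dot x/y$ and, since $f=f(s)$ and $\partial_{s}$ is a principal direction, $A(\grad f)=\kappa_{0}\grad f$. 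Thus \eqref{eq-def-biconservative-space-form} becomes $(2\kappa_{0}+f)\grad f=0$, and on a proper biconservative hypersurface $2\kappa_{0}+f$ vanishes identically, i.e.
\[
\dot x=\cos\theta,\qquad \dot y=\sin\theta,\qquad 3\,\dot\theta=\frac{q\cos\theta}{y}-\frac{p\sin\theta}{x}.
\]
This system is invariant under $(s,x,y)\mapsto(\lambda s,\lambda x,\lambda y)$; writing $x=\rho\cos\psi$, $y=\rho\sin\psi$ with $0<\psi<\pi/2$, putting $\xi=\theta-\psi$ and reparametrising by $dt=ds/\rho$ turns it into the autonomous planar system
\[
\psi'=\sin\xi,\qquad \xi'=\tfrac13\big[\cos\xi\,(q\cot\psi-p\tan\psi)-(p+q+3)\sin\xi\big]
\]
on the open cylinder $(\psi,\xi)\in(0,\pi/2)\times(\R/2\pi\z)$, together with the slaved equation $(\log\rho)'=\cos\xi$; a trajectory of the planar system plus a value of $\rho$ at one instant reconstructs a biconservative hypersurface, distinct trajectories giving non-homothetic ones.

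\textbf{Equilibria, the family, the asymptotics, the dichotomy.} The only equilibria are $(\psi_{0},0)$ and $(\psi_{0},\pi)$ with $\tan^{2}\psi_{0}=q/p$, and the ray $\psi=\psi_{0}$ is exactly the profile of the minimal $SO(p+1)\times SO(q+1)$-invariant cone. Linearising at $(\psi_{0},0)$, with $m=p+q$, gives the characteristic polynomial $3\lambda^{2}+(m+3)\lambda+2m$: its trace is negative and its determinant positive, so $(\psi_{0},0)$ is a sink, and its discriminant is $m^{2}-18m+9$, whose roots are $9\pm6\sqrt2\approx0.51,\,17.49$; hence the sink is a \emph{focus} for $p+q\le17$ and a (stable) \emph{node} for $p+q\ge18$, while a similar computation shows $(\psi_{0},\pi)$ is a source. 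The region $\{|\xi|<\pi/2\}$ is forward-invariant (on $\xi=\pm\pi/2$ one has $\xi'=\mp(m+3)/3$), orbits in it cannot reach the axes $\psi\in\{0,\pi/2\}$, and on it $\cos\xi>0$, so by Poincar\'e--Bendixson there are no periodic orbits there and every forward orbit converges to $(\psi_{0},0)$: this 2-dimensional set of initial data yields infinitely many distinct trajectories, hence an infinite family of proper biconservative hypersurfaces. Along each, $\psi\to\psi_{0}$ and $\xi\to0$, so $(\log\rho)'\to1$ and $\rho\to\infty$: the profile curve escapes to infinity asymptotically to $\psi=\psi_{0}$, the minimal cone. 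Finally, near $(\psi_{0},0)$ the focus ($p+q\le17$) makes $\psi-\psi_{0}$ change sign for arbitrarily large $t$, hence for arbitrarily large $s$, so $\gamma$ meets the profile of the minimal cone at infinitely many points at infinity, whereas the node ($p+q\ge18$) makes $\psi-\psi_{0}$ eventually of constant sign, so $\gamma$ is eventually disjoint from that profile.

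\textbf{Incompleteness.} Since $\rho$ grows exponentially in $t$ near the sink, the forward arc length $\int\rho\,dt$ is infinite, so the end $s\to+\infty$ is metrically complete; any incompleteness must come from the other end. Here one needs a global analysis of the planar flow in the complementary region $\{\pi/2<\xi<3\pi/2\}$ (where $\rho$ is decreasing) to show that, traced backwards, every orbit of the family leaves the strip $0<\psi<\pi/2$ through a coordinate axis at a finite parameter value $t_{-}$ with $\rho$ bounded, hence at a finite arc length $s_{-}>-\infty$; the profile curve is then defined only for $s\in(s_{-},+\infty)$, so the unit-speed lines $s\mapsto(s,w_{1},w_{2})$ of the warped-product metric $ds^{2}+x(s)^{2}g_{S^{p}}+y(s)^{2}g_{S^{q}}$ are geodesics that cannot be continued past $s_{-}$, and none of these hypersurfaces is complete. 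This last step is the main obstacle: the local picture at the sink already delivers the family, its asymptotics and the $17/18$ dichotomy, but proving incompleteness requires excluding all periodic orbits and, above all, a heteroclinic orbit from the source $(\psi_{0},\pi)$ to the sink $(\psi_{0},0)$, which would otherwise give a \emph{complete} biconservative hypersurface with two ends asymptotic to the minimal cone — the analogue of the complete minimal hypersurfaces asymptotic to the Simons-type cones.
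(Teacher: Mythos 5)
Your reduction, phase--plane formulation and linearization are essentially the paper's argument in different coordinates: your $(\psi,\xi=\theta-\psi)$ system is a time-rescaled version of the $(\vartheta,\alpha)$ system \eqref{campodivettoripianoalfatheta}, your equilibria $(\psi_0,0),(\psi_0,\pi)$ are $P_0,P_1$ of Lemma \ref{lemmapuntistazionari}, and your characteristic polynomial $3\lambda^2+(m+3)\lambda+2m$ yields the same discriminant $m^2-18m+9$ and the same $17/18$ threshold, so the asymptotics and the spiral/node dichotomy are obtained as in the paper. Two steps, however, are asserted rather than proved. First, Poincar\'e--Bendixson by itself does not exclude periodic orbits; the paper rules them out by the nullcline-crossing argument of Lemma \ref{nienteorbiteperiodicheinRuno} (and Lemma \ref{nienteorbiteperiodicheinRdue}). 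In your coordinates a clean substitute is a Bendixson--Dulac argument with the function $1/\cos\xi$ on $\{|\xi|<\pi/2\}$: the rescaled field $\bigl(\tan\xi,\tfrac13[(q\cot\psi-p\tan\psi)-(p+q+3)\tan\xi]\bigr)$ has divergence $-\tfrac{p+q+3}{3}\sec^2\xi<0$, but you would have to say this. Second, ``orbits cannot reach the axes $\psi\in\{0,\pi/2\}$'' needs an argument (your field is singular there); the paper gets it from uniqueness for the regular field, where the vertical boundary lines are invariant (Lemma \ref{permanenzatraiettorieinRuno}, Remark \ref{soluzioninonvannosugliassi}).

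The genuine gap is the last part of the statement: that these hypersurfaces are \emph{cones} and that none is complete. You explicitly leave this open, and your proposed mechanism (backward orbits leaving the strip through a coordinate axis at finite parameter) is not the one that can work: by Remark \ref{soluzioninonvannosugliassi} a solution can meet the boundary of the orbit space only at the origin. The ingredient you are missing is Lemma \ref{monotoniadiIeJ}: the quantities $I=y^{q/3}\cos\alpha$ and $J=x^{p/3}\sin\alpha$ are non-decreasing along solutions of \eqref{equazione-biconservative}. In the proof of Theorem \ref{teoremabiconservative} these are used twice: forward in $s$, positivity of $I$ (or $J$) at one point shows the profile can never reach the origin, so it is defined for all $s\ge s^*$ and produces the end asymptotic to the minimal cone; backward in $s$, the paper confronts exactly the alternative you flag -- a solution defined for all $s<s^*$, whose trajectory would emanate from the source $P_1$ (your heteroclinic scenario) -- and argues via the signs and monotonicity of $I$ and $J$ that this case (A) is impossible, so every profile reaches the origin at a finite $s_0$. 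That is what yields the cone vertex, hence the incompleteness, in the paper. Without this lemma (or some substitute controlling the backward end in the orbit space, not just in the rescaled $(\psi,\xi)$-plane, where the radial variable has been quotiented out), the words ``cones'' and ``none of these hypersurfaces is complete'' in the statement remain unproven in your proposal.
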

\begin{remark} The lack of completeness is due to the fact that these hypersurfaces present a topological (cone-like) singularity at the origin of $\R^n$ .  
\end{remark}

 \noindent {\bf Theorem}~{\bf \ref{teo-revolution-catenary}.}
{\em There exists an infinite family of complete, proper $SO(p+1)$-invariant biconservative hypersurfaces in $\R^n$ ($n=p+2$).
Their corresponding profile curves $\gamma(s)$ are of ``{catenary}'' type.}\\

\vspace{2mm}

The study of biconservative hypersurfaces in the Euclidean space is also relevant for the study of biharmonic hypersurfaces.
In fact, for biharmonic submanifolds in $\R^n$,  it is still open the Chen's conjecture (see \cite{Chen}): {\it biharmonic submanifolds into $\R^n$ are minimal}.
The conjecture is still open even for biharmonic hypersurfaces in $\R^n$.
As we have already noticed, biharmonic hypersurfaces are biconservative, thus a way to tackle the Chen conjecture is to prove that, amongst the
proper biconservative hypersurfaces, none is proper biharmonic. Clearly, $SO(p+1)$-invariant hypersurfaces in $\R^n$ ($n=p+2$) have at most two distinct principal curvatures and, by a result of Dimitric (see \cite{dimitric}), any biharmonic hypersurface in $\R^n$ with at most two distinct principal curvatures is minimal.
By contrast, the $SO(p+1)\times SO(q+1)$-invariant hypersurfaces of Theorem \ref{teoremabiconservativeshort} have three distinct principal curvatures and there is
no general result that forces a biharmonic hypersurface in $\R^n$ with at least three distinct principal curvatures to be minimal. The only exception is
when the hypersurface is in $\R^4$, in which case it was proved in \cite{HasVla95} and \cite{Defever} that bihamonicity implies minimality.

Following this venue we show in Section~\ref{biharmonic-immersion-non-existence} that, amongst our proper $SO(p+1)
\times SO(q+1)$-invariant biconservative hypersurfaces in $\R^n$ ($n=p+q+2$), there are no proper biharmonic hypersurfaces. This result may be considered as a further  step towards the proof of Chen's conjecture.

\section{Biharmonic maps and the stress-energy tensor}\label{sec:stress-energy-tensor}

As described by Hilbert in~\cite{DH}, the {\it stress-energy}
tensor associated to a variational problem is a symmetric
$2$-covariant tensor $S$ conservative at critical points,
i.e. with $\Div S=0$.

In the context of harmonic maps $\varphi:(M,g)\to (N,h)$ between two Riemannian manifolds, the stress-energy tensor was studied in detail by
Baird and Eells in~\cite{PBJE} (see also \cite{AS} and \cite{BR}). Indeed, the Euler-Lagrange
equation associated to the energy functional \eqref{energia} is equivalent to the vanishing of the tension
field $\tau(\varphi)=\trace\nabla d\varphi$ (see \cite{ES}), and the tensor
$$
S=\frac{1}{2}\vert d\varphi\vert^2 g - \varphi^{\ast}h
$$
satisfies $\Div S=-\langle\tau(\varphi),d\varphi\rangle$. Therefore, $\Div S=0$ when the map is harmonic.

\begin{remark}\label{remark:conservative}
We point out that, in the case of isometric immersions, the condition $\Div S=0$ is always satisfied,
 since $\tau(\varphi)$ is normal.
\end{remark}

Now, we begin our study of the bienergy functional \eqref{bienergia} and of its associated Euler-Lagrange equation \eqref{bitensionfield}. In particular, we point out that, in the expression \eqref{bitensionfield} of the bitension field,
$\Delta$ is the rough Laplacian on sections of $\varphi^{-1} \, (TN)$ that, for a local orthonormal frame $\{e_i\}_{i=1}^m$ on $M$, is defined by
$$
    \Delta=-\sum_{i=1}^m\{\nabla^{\varphi}_{e_i}
    \nabla^{\varphi}_{e_i}-\nabla^{\varphi}_
    {\nabla^{M}_{e_i}e_i}\}\,\,.
$$
The curvature operator on $(N,h)$, which also appears in \eqref{bitensionfield}, can be computed by means of
$$
    R^N (X,Y)= \nabla_X \nabla_Y - \nabla_Y \nabla_X -\nabla_{[X,Y]} \,\, .
$$

The study of the stress-energy tensor for the
bienergy was initiated  in \cite{GYJ2} and afterwards developed in \cite{LMO}. Its expression is \begin{eqnarray*}
S_2(X,Y)&=&\frac{1}{2}\vert\tau(\varphi)\vert^2\langle X,Y\rangle+
\langle d\varphi,\nabla\tau(\varphi)\rangle \langle X,Y\rangle \\
\nonumber && -\langle d\varphi(X), \nabla_Y\tau(\varphi)\rangle-\langle
d\varphi(Y), \nabla_X\tau(\varphi)\rangle,
\end{eqnarray*}
and it satisfies the condition
\begin{equation}\label{eq:2-stress-condition}
\Div S_2=-\langle\tau_2(\varphi),d\varphi\rangle,
\end{equation}
thus conforming to the principle of a  stress-energy tensor for the
bienergy.\\

If  $\varphi:(M,g)\to (N,h)$ is an isometric immersion, then \eqref{eq:2-stress-condition} becomes
$$
\Div S_2=-\tau_2(\varphi)^{\top}.
$$
This means that isometric immersions with $\Div S_2=0$ correspond to immersions with va\-nishing tangent part of the corresponding bitension field.
The decomposition of the bitension field with respect to its normal and
tangent components was obtained with contributions of \cite{BMO13,C84,LM08,O02,O10} and for hypersurfaces it can be summarized in the following theorem.

\begin{theorem}\label{th: bih subm N}
Let  $\varphi:M^{n-1}\to N^{n}$ be an isometric immersion with mean curvature vector field $H=(f/(n-1))\,\eta$. Then, $\varphi$ is biharmonic if and only if the normal and the tangent components of $\tau_2(\varphi)$  vanish,  i.e. respectively

\begin{subequations}
\begin{equation}\label{eq: caract_bih_normal}
\Delta {f}+f |A|^2- f\ricci^N(\eta,\eta)=0
\end{equation}
and
\begin{eqnarray}\label{eq: caract_bih_tangent}
2 A(\grad f)+  f \grad f-2 f \ricci^N(\eta)^{\top}=0 \,\, ,
\end{eqnarray}
\end{subequations}
where $A$ is the shape operator and $\ricci^N(\eta)^{\top}$ is the tangent  component of the Ricci curvature of $N$ in the direction of the unit normal $\eta$ of $M$ in $N$.
\end{theorem}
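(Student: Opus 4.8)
The plan is to derive the two equations directly from the coordinate-free expression \eqref{bitensionfield} of the bitension field, splitting it into components tangent and normal to $M$. Since $\varphi$ is an isometric immersion, $\tau(\varphi)=(n-1)H=f\,\eta$ is a normal vector field along $M$; hence $\varphi$ is biharmonic if and only if $\tau_2(\varphi)=0$, which is equivalent to the simultaneous vanishing of $\tau_2(\varphi)^{\top}$ and $\tau_2(\varphi)^{\perp}$. So it suffices to compute these two components, and the content of the theorem is that they equal the left-hand sides of \eqref{eq: caract_bih_normal} and \eqref{eq: caract_bih_tangent} up to overall sign.

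First I would fix a point $p\in M$ and a local orthonormal frame $\{e_i\}_{i=1}^{n-1}$ on $M$ geodesic at $p$, so that $(\nabla^{M}_{e_i}e_j)(p)=0$, all the computations below being understood at $p$. Using the Weingarten formula for a hypersurface, $\nabla^{\varphi}_{X}\eta=-A(X)$, one gets
\[
\nabla^{\varphi}_{e_i}\tau(\varphi)=(e_i f)\,\eta-f\,A(e_i).
\]
Differentiating once more and applying the Gauss formula $\nabla^{\varphi}_{X}Y=\nabla^{M}_{X}Y+\langle A(X),Y\rangle\,\eta$ (for $X,Y$ tangent) together with Weingarten, one computes $-\Delta\tau(\varphi)=\sum_{i}\nabla^{\varphi}_{e_i}\nabla^{\varphi}_{e_i}\tau(\varphi)$, whose normal part is $\bigl(\Delta f+f\,|A|^2\bigr)\eta$, with $\Delta$ the Laplace--Beltrami operator on functions and $|A|^2=\sum_i\langle A(e_i),A(e_i)\rangle$, and whose tangent part collects the cross terms $-2\sum_i (e_i f)\,A(e_i)=-2A(\grad f)$ together with a term proportional to the divergence of the shape operator $\sum_i(\nabla_{e_i}A)(e_i)$.

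Next I would treat the curvature term $-\trace R^N(d\varphi,\tau(\varphi))d\varphi=-f\sum_{i}R^N(e_i,\eta)e_i$, splitting it into its $\eta$-component, which produces (with the sign convention for $R^N$ recalled in the excerpt) the term $f\,\ricci^N(\eta,\eta)\,\eta$, and its tangent part, which produces a multiple of $f\,\ricci^N(\eta)^{\top}$. To identify the divergence-of-$A$ term left over from $-\Delta\tau(\varphi)$, I would invoke the Codazzi equation $(\nabla_{X}A)Y-(\nabla_{Y}A)X=-\bigl(R^N(X,Y)\eta\bigr)^{\top}$ and trace it, obtaining an identity of the form $\sum_i(\nabla_{e_i}A)(e_i)=\grad f+\ricci^N(\eta)^{\top}$. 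Substituting this and collecting, respectively, the normal and the tangent contributions of the Laplacian and of the curvature term, one is left with
\[
\tau_2(\varphi)^{\perp}=-\bigl(\Delta f+f\,|A|^2-f\,\ricci^N(\eta,\eta)\bigr)\eta,\qquad
\tau_2(\varphi)^{\top}=-\bigl(2A(\grad f)+f\,\grad f-2f\,\ricci^N(\eta)^{\top}\bigr),
\]
which, upon setting each to zero, gives exactly \eqref{eq: caract_bih_normal} and \eqref{eq: caract_bih_tangent}. The main obstacle is precisely this last bookkeeping in the tangent component: one must recognize that the divergence of the shape operator produced by the rough Laplacian has to be rewritten by means of the traced Codazzi equation in order to make both $\grad f$ and $\ricci^N(\eta)^{\top}$ appear, and then track the correct numerical coefficients (in particular the factors of $2$) and all sign conventions consistently; the normal part, by contrast, is a routine application of the Gauss and Weingarten formulae. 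This consolidates the decomposition obtained with contributions of \cite{BMO13,C84,LM08,O02,O10}.
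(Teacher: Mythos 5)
Your strategy is the standard one and, in fact, the only comparison available: the paper does not prove Theorem \ref{th: bih subm N} but quotes it as a summary of \cite{BMO13,C84,LM08,O02,O10}, and the usual derivation is exactly what you propose — split $\tau_2(\varphi)$ into normal and tangent parts using Gauss, Weingarten and the traced Codazzi equation; your displayed final formulas for $\tau_2(\varphi)^{\perp}$ and $\tau_2(\varphi)^{\top}$ are correct. However, two intermediate identities are stated with the wrong sign, and followed literally they do not produce \eqref{eq: caract_bih_tangent}. With the paper's conventions ($R^N(X,Y)=\nabla_X\nabla_Y-\nabla_Y\nabla_X-\nabla_{[X,Y]}$, $\Delta$ the positive rough Laplacian, $\nabla^{\varphi}_X\eta=-A(X)$), at a point where the frame is geodesic one gets
\begin{equation*}
-\Delta\tau(\varphi)=\sum_i\nabla^{\varphi}_{e_i}\nabla^{\varphi}_{e_i}(f\eta)
=-\bigl(\Delta f+f|A|^2\bigr)\eta-2A(\grad f)-f\sum_i(\nabla_{e_i}A)(e_i)\,,
\end{equation*}
so the normal part of $-\Delta\tau(\varphi)$ is $-(\Delta f+f|A|^2)\eta$, not $+(\Delta f+f|A|^2)\eta$ as you write; this first one is probably only a slip of reference, since your final expression for $\tau_2(\varphi)^{\perp}$ carries the right sign.

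The substantive error is in the traced Codazzi identity. From the Codazzi equation you quote, $(\nabla_XA)Y-(\nabla_YA)X=-\bigl(R^N(X,Y)\eta\bigr)^{\top}$, setting $X=e_i$, pairing with $e_i$ and summing (using that $\nabla_{e_i}A$ is symmetric) gives $\langle\sum_i(\nabla_{e_i}A)e_i,Y\rangle-Y(f)=-\sum_i\langle R^N(e_i,Y)\eta,e_i\rangle=-\ricci^N(\eta,Y)$, i.e.
\begin{equation*}
\sum_i(\nabla_{e_i}A)(e_i)=\grad f-\ricci^N(\eta)^{\top}\,,
\end{equation*}
with a minus sign, not the plus sign you claim. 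Since the tangential part of the curvature term is $-f\sum_i\bigl(R^N(e_i,\eta)e_i\bigr)^{\top}=+f\,\ricci^N(\eta)^{\top}$, it is precisely this minus sign that makes the two Ricci contributions add up, inside $\tau_2(\varphi)^{\top}$, to the coefficient $-2f\,\ricci^N(\eta)^{\top}$ appearing in \eqref{eq: caract_bih_tangent}. With your identity as stated the two contributions cancel and you would arrive at $\tau_2(\varphi)^{\top}=-\bigl(2A(\grad f)+f\grad f\bigr)$, which is not the theorem and is false for a general ambient manifold $N$. The repair is local: redo the trace of Codazzi carefully; the rest of your computation then yields exactly \eqref{eq: caract_bih_normal} and \eqref{eq: caract_bih_tangent}.
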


Finally, from \eqref{eq: caract_bih_tangent}, an isometric immersion $\varphi:M^{n-1}\to N^{n}$  satisfies $\Div S_2=0$, i.e. it is biconservative, if and only if
$$
2 A(\grad f)+  f \grad f-2 f \ricci^N(\eta)^{\top}=0
$$
which is equation~\eqref{eq-def-biconservative-space} given in the introduction.

\section{$SO(p) \times SO(q)$-invariant immersions into Euclidean spaces}

In this section we carry out the differential geometric work which is necessary in order to study biconservative $SO(p+1) \times SO(q+1)$-invariant immersions into the Euclidean $\R^n$, $n=p+q+2$. More precisely, assuming the canonical splitting $\R^n=\R^{p+1}\times\R^{q+1}$, we shall study isometric immersions of the following type:
\begin{equation}\label{equivariantimmersion}
\left .
  \begin{array}{cccccccccc}
    \varphi_{p,q} &\colon& M=& \s^p &\times& \s^q&\times&(a,b)&\to &\R^{p+1}\times\R^{q+1}\\
    &&&&&&&&& \\
   &&&(w&,&z&,&s\,)&\longmapsto & \quad (x(s)\,w,\,y(s)\,z) \,\, , \\
  \end{array}
\right .
\end{equation}
where $(a,b)$ is a real interval which will be precised during the analysis and $x(s),\, y(s)$ are smooth positive functions. When it is clear from the context, we shall write $\varphi$ instead of $\varphi_{p,q}$.
We shall also assume that
\begin{equation}\label{sascissacurvilinea}
    \dot{x}^2+\dot{y}^2 =1 \,\, ,
\end{equation}
so that the induced metric on the domain in \eqref{equivariantimmersion} is given by:
\begin{equation}\label{inducedmetric}
    g=x^2(s)\,g_{\s^p}+y^2(s)\,g_{\s^q}+ds^2 \,\,,
\end{equation}
where $g_{\s^p}$ and $g_{\s^q}$ denote the Euclidean metrics of the unit spheres $\s^p$ and $\s^q$ respectively. We also note that the unit normal to $\varphi(M)$ can be conveniently written as
\begin{equation}\label{unitnormal}
    \eta = (-\,\dot{y}\,w,\,\dot{x}\,z) \,\, .
\end{equation}

Immersions of type \eqref{equivariantimmersion} are $G=SO(p+1)
\times SO(q+1)$-invariant and therefore we can work in the framework of
 equivariant differential geometry (see \cite{ER}, \cite{Hsiang}, \cite{MR}). In particular, the orbit space coincides with the flat Euclidean first quadrant
 \begin{equation}\label{orbitspace}
    Q=\R^{n}/G=\left \{(x,\,y) \in \, \R^2 \,\, : \,\, x,\,y \, \geq 0\,
\right \} \,\, .
 \end{equation}
We note that regular (i.e., corresponding to a point $(x,y)$ with both $x,\,y >0$) orbits are of the type $\s^p \times \s^q$. The orbit associated to the origin is a single point, while the other points on the $x$-axis (respectively, the $y$-axis) correspond to $\s^p$ (respectively, $\s^q$). We also note that, since \eqref{sascissacurvilinea} holds, it is often convenient to express quantities with respect to the angle $\alpha$ that the profile curve $\gamma(s)=(x(s),\,y(s))$ forms with the $x$-axis. In particular, we have:
\begin{equation}\label{angoloalfa}
    \left \{ \begin{array}{l}
              \dot{x}=\cos \alpha \\
              \dot{y}=\sin \alpha
            \end{array}
\right .
\end{equation}
and also, for future use,
\begin{equation}\label{angoloalfabis}
    \dot{\alpha}=\ddot{y}\,\dot{x}-\,\ddot{x}\,\dot{y} \,\,.
    \end{equation}
The property that an immersion of type \eqref{equivariantimmersion} is biconservative (respectively, biharmonic) is equivalent to the fact that $\gamma$ verifies an ODE (respectively, a system of ODE) in the orbit space.
More precisely, we prove the following result:
 \begin{proposition}\label{condizionedibiarmonicita} Let $\varphi$ be an immersion as in \eqref{equivariantimmersion} and let
 \begin{equation}\label{definizionedif}
    f=\dot{\alpha}+p\, \frac{\sin \alpha}{x}- q \,\frac{\cos \alpha}{y} \,.
 \end{equation}

 Then the tangential and normal parts of the bitension field $\tau_2(\varphi)$ vanish if, respectively,
 \begin{equation}\label{bitensionecomponentetangente}
    \dot{f}\, (f+2\,\dot{\alpha}) =0 
 \end{equation}
and
 \begin{equation}\label{bitensionecomponentenormale}
   \ddot{f}+\dot{f}  \left(p\,\frac{\dot{x}}{x}+q\,\frac{\dot{y}}{y}\right) -f\left(p\,\left(\frac{\dot{y}}{x}\right)^2+q\,\left(\frac{\dot{x}}{y}\right)^2+\dot{\alpha}^2\right)=0\,.
 \end{equation}
 \end{proposition}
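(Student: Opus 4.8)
The plan is to compute the bitension field $\tau_2(\varphi)$ explicitly for the equivariant immersion \eqref{equivariantimmersion} and to match its tangential and normal components against \eqref{bitensionecomponentetangente} and \eqref{bitensionecomponentenormale}. The starting point is Theorem~\ref{th: bih subm N}, specialized to $N=\R^n$, so that $\ricci^N\equiv 0$: the tangent part of $\tau_2(\varphi)$ vanishes iff $2A(\grad f)+f\grad f=0$, and the normal part vanishes iff $\Delta f+f|A|^2=0$. So the real content is to reduce these two equations to ODEs in the variable $s$ along the profile curve $\gamma$.

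First I would set up the geometry of the immersion. Using the induced metric \eqref{inducedmetric}, an adapted orthonormal frame is $\{(1/x)\,\bar e_i,\ (1/y)\,\bar e_j,\ \partial_s\}$, where $\bar e_i$, $\bar e_j$ are orthonormal frames on the unit spheres $\s^p$, $\s^q$. Differentiating \eqref{equivariantimmersion} and using \eqref{unitnormal}, I would compute the second fundamental form and find the three principal curvatures: $k_1=-\dot y/x$ (with multiplicity $p$, in the $\s^p$ directions), $k_2=\dot x/y$ (multiplicity $q$, in the $\s^q$ directions), and $k_3=\dot\alpha$ (in the $\partial_s$ direction), where \eqref{angoloalfabis} identifies the last one. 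Hence $f=\trace A = -p\,\dot y/x + q\,\dot x/y + \dot\alpha$, which is exactly \eqref{definizionedif} after writing $\dot x=\cos\alpha$, $\dot y=\sin\alpha$ via \eqref{angoloalfa}. Since $f$ depends on $s$ only, $\grad f=\dot f\,\partial_s$ and $A(\grad f)=\dot f\,A(\partial_s)=\dot f\,k_3\,\partial_s=\dot f\,\dot\alpha\,\partial_s$; therefore $2A(\grad f)+f\grad f = (2\dot f\,\dot\alpha + f\dot f)\,\partial_s = \dot f\,(f+2\dot\alpha)\,\partial_s$, which gives \eqref{bitensionecomponentetangente} immediately.

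For the normal equation I need $\Delta f$ and $|A|^2$. Because $f=f(s)$, the rough Laplacian (here acting on the function, i.e. the Laplace–Beltrami operator of $g$) reduces to $\Delta f = -\ddot f - \dot f\,\Delta s$, and the coefficient $\Delta s$ is governed by the mean curvatures of the coordinate hypersurfaces: from \eqref{inducedmetric} one gets $\Delta s = -\big(p\,\dot x/x + q\,\dot y/y\big)$ (the logarithmic derivative of the volume density $x^p y^q$), so $\Delta f = -\ddot f - \dot f\big(p\,\dot x/x + q\,\dot y/y\big)$ — this is the bracket appearing in \eqref{bitensionecomponentenormale} up to sign. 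Next, $|A|^2 = p\,k_1^2 + q\,k_2^2 + k_3^2 = p\,(\dot y/x)^2 + q\,(\dot x/y)^2 + \dot\alpha^2$, which is exactly the coefficient of $f$ in \eqref{bitensionecomponentenormale}. Substituting into $\Delta f + f|A|^2 = 0$ and multiplying by $-1$ yields \eqref{bitensionecomponentenormale}.

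The main obstacle is bookkeeping rather than conceptual: correctly computing the second fundamental form of the equivariant immersion and, in particular, verifying the sign and form of $\Delta s$ (equivalently, that the three principal curvatures are as claimed and that $k_3=\dot\alpha$ matches \eqref{angoloalfabis}). One must also be careful that $\Delta$ in \eqref{bitensionfield} is the rough Laplacian on $\varphi^{-1}TN$ acting on the normal section $f\eta$, and check that its tangential trace contributes the $A(\grad f)$-type terms while its normal part contributes $\Delta f$; but this is precisely the content of the decomposition recorded in Theorem~\ref{th: bih subm N}, so once the principal curvatures and the quantity $\Delta s$ are in hand, the two displayed ODEs follow by direct substitution.
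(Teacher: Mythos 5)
Your strategy coincides with the paper's: specialize Theorem~\ref{th: bih subm N} to the flat target, so that the tangential condition is $2A(\grad f)+f\grad f=0$ and the normal one is $\Delta f+f|A|^2=0$, then compute $A$, $\grad f$, $\Delta f$ and $|A|^2$ in the adapted frame. However, there is a concrete sign error in your shape operator, and as written it breaks the key identification on which the whole proposition rests. With the normal \eqref{unitnormal} and the Weingarten convention for which $\tau(\varphi)=(\trace A)\,\eta$ (the convention needed to feed $f=\trace A$ into Theorem~\ref{th: bih subm N}), one finds $\nabla^{\varphi}_X\eta=d\varphi\left(-\tfrac{\dot y}{x}X\right)$ for $X$ tangent to $\s^p$, $\nabla^{\varphi}_Y\eta=d\varphi\left(\tfrac{\dot x}{y}Y\right)$ for $Y$ tangent to $\s^q$, and $\nabla^{\varphi}_{\partial_s}\eta=-\dot\alpha\,d\varphi(\partial_s)$, so the principal curvatures are $\dot y/x$ ($p$ times), $-\dot x/y$ ($q$ times) and $\dot\alpha$, as in \eqref{eq-diagonal-shape-operator} --- not $-\dot y/x$, $+\dot x/y$, $\dot\alpha$ as you claim. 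Your triple cannot arise from any consistent choice: replacing $\eta$ by $-\eta$ (or switching the sign convention for $A$) flips \emph{all three} curvatures, including $k_3=\dot\alpha$, so flipping only the spherical ones is impossible. Consequently your asserted identity, that $-p\dot y/x+q\dot x/y+\dot\alpha$ ``is exactly \eqref{definizionedif}'', is false: \eqref{definizionedif} reads $\dot\alpha+p\dot y/x-q\dot x/y$, and the two differ unless $p\dot y/x=q\dot x/y$. Carried through honestly, your $\trace A$ equals $2\dot\alpha-f$, the tangential condition becomes an ODE for that function rather than \eqref{bitensionecomponentetangente} for $f$, and downstream the biconservative equation would come out with the wrong sign pattern instead of \eqref{equazione-biconservative}; so this is not a removable ambiguity but a step that must be corrected.

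Once the Weingarten map is recomputed correctly, the remainder of your argument is precisely the paper's proof: $\grad f=\dot f\,\partial_s$ together with $A(\partial_s)=\dot\alpha\,\partial_s$ gives \eqref{bitensionecomponentetangente}; and $|A|^2=p(\dot y/x)^2+q(\dot x/y)^2+\dot\alpha^2$ (insensitive to the signs, so your formula is fine) together with $\Delta f=-\ddot f-\dot f\left(p\,\tfrac{\dot x}{x}+q\,\tfrac{\dot y}{y}\right)$ gives \eqref{bitensionecomponentenormale}. A minor further remark: your intermediate statements ``$\Delta f=-\ddot f-\dot f\,\Delta s$'' and ``$\Delta s=-(p\dot x/x+q\dot y/y)$'' are mutually inconsistent with the (correct) final expression you state for $\Delta f$; with the sign convention of Section~\ref{sec:stress-energy-tensor} the chain rule reads $\Delta f=-\ddot f+\dot f\,\Delta s$ with $\Delta s=-(p\dot x/x+q\dot y/y)$. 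These are bookkeeping issues, not conceptual ones, but the principal-curvature signs in particular must be fixed for the proposition, with $f$ given by \eqref{definizionedif}, to follow.
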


 \begin{proof}
 We write down explicitly $\tau(\varphi)$. For this purpose, let $\{X_i\}_{i=1}^p$ and $\{Y_a\}_{a=1}^q$ be local orthonormal frames on $\s^p$ and $\s^q$ respectively and let $\partial_s=\partial/\partial s$ be the tangent vector field to $(a,b)$. Then the frame
 \begin{equation}\label{eq:def-frame}
 \left\{\frac{X_i}{x},\frac{Y_a}{y},\partial_s\right\}_{i=1,\ldots, p;\;a=1,\ldots,q}
\end{equation}
 is a local orthonormal frame on $M$ with respect to the induced metric \eqref{inducedmetric}.
 Now, using the definition of the pull-back connection and the Weingarten equation of $\s^p$ in $\R^{p+1}$  (respectively of $\s^q$ in $\R^{q+1}$) we obtain
 \begin{equation}\label{eq:calculation-tau-1}
 \nabla^{\varphi}_{\frac{X_i}{x}}\,d\varphi\left(\frac{X_i}{x}\right)= \nabla^{\R^{p+1}}_{X_i}X_i\,,\quad \nabla^{\varphi}_{\frac{Y_a}{y}}\,d\varphi\left(\frac{Y_a}{y}\right)= \nabla^{\R^{q+1}}_{Y_a}Y_a\,.
\end{equation}
 Moreover, defining the following vector fields on $\R^n$
 $$
 Z_1(\tilde w,\tilde z)=(\tilde w,0)\,,\quad Z_2(\tilde w,\tilde z)=(0,\tilde z)\,,
 $$
 we have
 \begin{equation}\label{eq:calculation-tau-2}
 \begin{aligned}
 \nabla^{\varphi}_{\partial_s} \,d\varphi\left(\partial_s\right)=&\nabla^{\varphi}_{\partial_s} (\dot{x}\, w, \dot{y}\, z)=\nabla^{\varphi}_{\partial_s} (\dot{x}\, w, 0)+\nabla^{\varphi}_{\partial_s} (0, \dot{y}\, z)\\
 =&\nabla^{\varphi}_{\partial_s}[\frac{\dot{x}}{x}\, (x\, w,0)]+\nabla^{\varphi}_{\partial_s}[\frac{\dot{y}}{y}\, (0 , y\, w)]=\nabla^{\varphi}_{\partial_s}[\frac{\dot{x}}{x}\, (Z_1\circ\varphi)]+\nabla^{\varphi}_{\partial_s}[\frac{\dot{y}}{y}\, (Z_2\circ\varphi)]\\
 =&\frac{\ddot{x}\,x-\dot{x}^2}{x^2}\,(x\, w,0)+\frac{\dot{x}}{x}\,\nabla^{\R^n}_{(\dot{x}\,w,\dot{y}\,z)} Z_1+
 \frac{\ddot{y}\,y-\dot{y}^2}{y^2}\,(0,y\, z)+\frac{\dot{y}}{y}\,\nabla^{\R^n}_{(\dot{x}\,w,\dot{y}\,z)} Z_2\\
 =&(\ddot{x}\,w,\ddot{y}\,z)=\dot{\alpha}\, \eta\,.
 \end{aligned}
\end{equation}
 To conclude the computation of the tension field, bearing in mind that the nonzero Christoffel symbols of the metric \eqref{inducedmetric} are
 $$
 \Gamma_{ij}^{k}=^{\s^p}\hspace{-1.4mm}\Gamma_{ij}^{k}\;,\qquad \Gamma_{ab}^{c}=^{\s^q}\hspace{-1.4mm}\Gamma_{ab}^{c}\;,
 $$
 $$
 \Gamma_{ij}^{\alpha}=-x\,\dot{x} \, (g_{\s^p})_{ij}\,,\quad \Gamma_{ab}^{\alpha}=-y\,\dot{y} \, (g_{\s^q})_{ab}\,,\quad \alpha=p+q+1\,,
 $$
 we can write
 \begin{equation}\label{eq:calculation-tau-3}
 \nabla^M_{\frac{X_i}{x}}\frac{X_i}{x}=\frac{1}{x^2}\, \nabla_{X_i}^{\s^p}{X_i}-\frac{\dot{x}}{x}\,\partial_s\,,\quad
 \nabla^M_{\frac{Y_a}{y}}\frac{Y_a}{a}=\frac{1}{y^2}\, \nabla_{Y_a}^{\s^q}{Y_a}-\frac{\dot{y}}{y}\,\partial_s\,.
\end{equation}
Finally, taking into account \eqref{eq:calculation-tau-1}--\eqref{eq:calculation-tau-3}, and that
$$
d\varphi(\nabla_{X_i}^{\s^p}{X_i})=x^2\, \nabla_{X_i}^{\s^p(x)}{X_i}\,,\quad
d\varphi(\nabla_{Y_a}^{\s^q}{Y_a})=y^2\,\nabla^{\s^q(y)}_{Y_a}Y_a
$$
where $\s^p(x)$ and $\s^q(y)$ are the spheres of radius $x$ and $y$ respectively, we obtain
$$
\begin{aligned}
\tau(\varphi)=& \sum_i \left\{  \nabla^{\varphi}_{\frac{X_i}{x}}\,d\varphi\left(\frac{X_i}{x}\right)-d\varphi\left(\nabla^M_{\frac{X_i}{x}}\frac{X_i}{x}\right)\right\}+\sum_a \left\{  \nabla^{\varphi}_{\frac{Y_a}{y}}\,d\varphi\left(\frac{Y_a}{y}\right)-d\varphi\left(\nabla^M_{\frac{Y_a}{y}}\frac{Y_a}{y}\right)\right\}\\
&+\nabla^{\varphi}_{\partial_s} \,d\varphi\left(\partial_s\right)-d\varphi\left(\nabla_{\partial_s}^M \partial_s\right)\\
=& \sum_i \left\{  \nabla^{\R^{p+1}}_{X_i} X_i-\nabla_{X_i}^{\s^p(x)}{X_i}\right\}+
\sum_a \left\{ \nabla^{\R^{q+1}}_{Y_a}Y_a-\nabla^{\s^q(y)}_{Y_a}Y_a\right\}+\left(p\frac{\dot{x}}{x}+q\frac{\dot{y}}{y}\right)d\varphi(\partial_s)+\dot{\alpha}\, \eta\\
=& -\frac{p}{x} w- \frac{q}{y} z+\left(p\frac{\dot{x}}{x}+q\frac{\dot{y}}{y}\right)d\varphi(\partial_s)+\dot{\alpha}\, \eta\,.
\end{aligned}
$$
Now, since $\varphi$ is an isometric immersion, $\tau(\varphi)=f\, \eta$ and,  by direct inspection,
\begin{equation}\label{eq:formula-per-f}
f=\dot{\alpha}+p\,\frac{\dot{y}}{x}-q\,\frac{\dot{x}}{y}\,.
\end{equation}

We now proceed to the computation of the shape operator of the isometric immersion $\varphi$. Since, for $X\in C(T\s^{p})$, we have (using the notation of \eqref{eq:calculation-tau-2})
$$
\begin{aligned}
\nabla^{\varphi}_X \eta=&\nabla^{\varphi}_X (-\dot{y}\, w,\dot{x}\, z)=\nabla^{\varphi}_X[-\frac{\dot{y}}{x}(x\, w,0)+\frac{\dot{x}}{y}(0,y\, z)]\\
=&\nabla^{\varphi}_X[-\frac{\dot{y}}{x}\,(Z_1\circ\varphi)]+\nabla^{\varphi}_X[\frac{\dot{x}}{y}\,(Z_2\circ\varphi)]\\
=&-\dot{y}\, X=d\varphi(-\frac{\dot{y}}{x}\,X)\,,
\end{aligned}
$$
we conclude that
$$
A(X)=\frac{\dot{y}}{x}\,X\,.
$$
Similarly, for $Y\in C(T\s^q)$, we obtain
$$
 A(Y)=-\frac{\dot{x}}{y}\,Y\,.
$$
Moreover, with analogous computations we find
$$
\nabla^{\varphi}_{\partial_s} \eta= d\varphi(-\dot{\alpha}\,\partial_s)\,,
$$
thus
$$
A(\partial_s)=\dot{\alpha}\,\partial_s\,.
$$
Then, with respect to the frame \eqref{eq:def-frame}, the matrix of the shape operator is
the diagonal matrix with entries in the diagonal:
\begin{equation}\label{eq-diagonal-shape-operator}
\overbrace{\frac{\dot{y}}{x} \cdots \frac{\dot{y}}{x}}^{p-\text{times}}\;\; \,\,\overbrace{-\frac{\dot{x}}{y}\cdots  -\frac{\dot{x}}{y}}^{q-\text{times}}\;\; \,\,\dot{\alpha}
\end{equation}
Note that, from \eqref{eq-diagonal-shape-operator}, we recover \eqref{eq:formula-per-f}, since $f=\trace A$.
To compute the tangential and normal parts of the bitension field $\tau_2(\varphi)$ we use Theorem~\ref{th: bih subm N}.
Since $\grad\,f=\dot{f}\, \partial_s$, from \eqref{eq: caract_bih_tangent} and the expression of the shape operator \eqref{eq-diagonal-shape-operator},
we immediately deduce that the tangential component of $\tau_2(\varphi)$ vanishes when \eqref{bitensionecomponentetangente} is satisfied. As for the normal part, we need
to compute the Laplacian of $f$. Using the orthonormal frame \eqref{eq:def-frame} and taking into account \eqref{eq:calculation-tau-3}, we find
$$
\begin{aligned}
-\Delta f=&\sum_i \left\{ \frac{1}{x} X_i \left( \frac{1}{x} X_i(f)\right)- \left(\nabla^M_{\frac{X_i}{x}}\frac{X_i}{x}\right) f\right\} + \sum_a \left\{ \frac{1}{y} Y_a \left( \frac{1}{y} Y_a(f)\right)- \left(\nabla^M_{\frac{Y_a}{y}}\frac{Y_a}{y}\right) f\right\}\\
&+\partial_s(\partial_s(f))-\left(\nabla^M_{\partial_s}\partial_s\right)f\\
=& p\,\frac{\dot{x}}{x}\,\dot{f}+q\,\frac{\dot{y}}{y}\,\dot{f}+\ddot{f}\,.
\end{aligned}
$$
Finally, since
$$
|A|^2=p\,\left(\frac{\dot{y}}{x}\right)^2+q\,\left(\frac{\dot{x}}{y}\right)^2+\dot{\alpha}^2
$$
we obtain that the normal part of $\tau_2(\varphi)$ vanishes when \eqref{bitensionecomponentenormale} is satisfied.
\end{proof}

 \begin{remark} We point out that the function $f$ in \eqref{definizionedif} coincides, up to a constant factor, with the mean curvature function. In particular, we recover immediately from \eqref{bitensionecomponentetangente} the well-known property, already announced  in the introduction, that a CMC immersion in $\R^n$ is biconservative. We shall prove below that there exist biconservative immersions of type \eqref{equivariantimmersion} which are not CMC: we shall call them \emph{proper} biconservative immersions.
 \end{remark}

\section{Proper $SO(p) \times SO(q)$-invariant biconservative immersions}\label{section-biconservative-existence}

According to \eqref{bitensionecomponentetangente}, an immersion of type \eqref{equivariantimmersion} is proper biconservative if $f$ is not constant and
\begin{equation*}
    f+2\,\dot{\alpha} =0\,\,.
\end{equation*}
Taking into account \eqref{definizionedif}, we see that the previous equation is equivalent to:
\begin{equation}\label{equazione-biconservative}
   3\, \dot{\alpha}+p\, \frac{\sin \alpha}{x}- q \,\frac{\cos \alpha}{y} = 0 \,\, .
\end{equation}

\begin{remark}\label{re:specialsolutionofsystembiconservative}
Since the only curves with $\dot{\alpha}=\cst\neq 0$ are arcs of circles parametrized by arc length, it is easy to check by direct inspection that \eqref{equazione-biconservative} does not admit any solution with $\dot{\alpha}=\cst\neq 0$.  We also observe that the only solution of \eqref{equazione-biconservative} with $\dot{\alpha}=0$ is the line  $y=\sqrt{(q/p)}\; x$ (parametrized by arc length). This solution corresponds to a minimal cone in $\R^n$. We conclude that an immersion of type \eqref{equivariantimmersion} is proper biconservative if and only if the profile curve $\gamma$ is a solution of \eqref{equazione-biconservative} with $\dot{\alpha}$ not identically zero.
\end{remark}

An immediate consequence of \eqref{equazione-biconservative} is the following result.

\begin{proposition}
Let $\varphi_{p,q}$ be an immersion of type \eqref{equivariantimmersion}, $\gamma(s)=(x(s),y(s))$ its profile curve and assume that  $p=3 p'$ and $q=3 q'$.  If $\varphi_{p,q}$ is proper biconservative, then $\varphi_{p',q'}$
is minimal. Conversely, if  $\varphi_{p',q'}$  is minimal, then $\varphi_{p,q}$ is either minimal or proper biconservative.
\end{proposition}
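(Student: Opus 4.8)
The plan is to exploit the fact that the mean-curvature-type function $f$ of \eqref{definizionedif} is, after the affine correction by $2\dot\alpha$, homogeneous of degree one in the pair $(p,q)$, so that the hypothesis $p=3p'$, $q=3q'$ ties the biconservative equation \eqref{equazione-biconservative} for $\varphi_{p,q}$ to the minimality equation $f=0$ for $\varphi_{p',q'}$. First I would fix the common profile curve $\gamma(s)=(x(s),y(s))$: since $x,y>0$ and $\dot x^2+\dot y^2=1$, formula \eqref{equivariantimmersion} defines both an immersion $\varphi_{p,q}$ into $\R^{p+q+2}$ and an immersion $\varphi_{p',q'}$ into $\R^{p'+q'+2}$, both arc-length parametrized and sharing the same angle function $\alpha$ along $\gamma$. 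Writing $f_{p,q}$ and $f_{p',q'}$ for the associated functions \eqref{definizionedif}, the substitution $p=3p'$, $q=3q'$ yields the elementary identity
\[
 f_{p,q}+2\dot\alpha = 3\dot\alpha+p\,\frac{\sin\alpha}{x}-q\,\frac{\cos\alpha}{y} = 3\left(\dot\alpha+p'\,\frac{\sin\alpha}{x}-q'\,\frac{\cos\alpha}{y}\right)=3\,f_{p',q'}.
\]

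For the direct implication, by Remark~\ref{re:specialsolutionofsystembiconservative} the immersion $\varphi_{p,q}$ is proper biconservative precisely when $\gamma$ solves \eqref{equazione-biconservative} with $\dot\alpha$ not identically zero; in particular $f_{p,q}+2\dot\alpha\equiv 0$ along $\gamma$, so the identity above forces $f_{p',q'}\equiv 0$, i.e. $\varphi_{p',q'}$ is minimal.

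For the converse, if $\varphi_{p',q'}$ is minimal then $f_{p',q'}\equiv 0$, hence $f_{p,q}+2\dot\alpha\equiv 0$, which is exactly equation \eqref{equazione-biconservative} for $\gamma$. Applying Remark~\ref{re:specialsolutionofsystembiconservative} once more, two cases arise: either $\dot\alpha\equiv 0$, in which case $\gamma$ is an arc of the line $y=\sqrt{q/p}\,x$ and, since $f_{p,q}=(f_{p,q}+2\dot\alpha)-2\dot\alpha\equiv 0$, the immersion $\varphi_{p,q}$ is a minimal cone; or $\dot\alpha$ is not identically zero, and then $\gamma$ is a solution of \eqref{equazione-biconservative} with $\dot\alpha\not\equiv 0$, so $\varphi_{p,q}$ is proper biconservative.

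I do not anticipate a genuine obstacle: the content of the statement is the one-line scaling identity $f_{p,q}+2\dot\alpha=3\,f_{p',q'}$ combined with the characterizations already recorded in \eqref{equazione-biconservative} and Remark~\ref{re:specialsolutionofsystembiconservative}. The only place asking for a little care is the degenerate alternative $\dot\alpha\equiv 0$ in the converse, and the bookkeeping of ``not identically zero'' versus ``nowhere zero'', both of which are absorbed into the cited Remark.
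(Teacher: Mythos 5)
Your proof is correct and follows essentially the same route as the paper: the whole content is the identity $f_{p,q}+2\dot\alpha=3\dot\alpha+p\,\frac{\sin\alpha}{x}-q\,\frac{\cos\alpha}{y}=3f_{p',q'}$, which the paper states in one line, and your use of Remark~\ref{re:specialsolutionofsystembiconservative} to handle the alternative $\dot\alpha\equiv 0$ versus $\dot\alpha\not\equiv 0$ simply makes explicit what the paper leaves implicit.
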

\begin{proof}
The assertion follows easily from the fact that if $p=3 p'$ and $q=3 q'$, then \eqref{equazione-biconservative}  becomes

$$
3\left( \dot{\alpha}+p'\, \frac{\sin \alpha}{x}- q' \,\frac{\cos \alpha}{y}\right) =3 f_{p',q'}= 0\,.
$$
\end{proof}
 In order to state our results concerning existence and qualitative behaviour of solutions of \eqref{equazione-biconservative}, we first carry out some preliminary analytical work. First, we introduce two quantities which play an important role in the study of solutions of \eqref{equazione-biconservative}:
\begin{lemma}\label{monotoniadiIeJ} Let
\begin{equation}\label{definizionediIeJ}
    I=y^{(q \slash 3)}\, \cos \alpha \qquad {\rm and} \qquad J=x^{(p \slash 3)}\, \sin \alpha \,\, .
\end{equation}
Then $I$ and $J$ are increasing along solutions of \eqref{equazione-biconservative}.
\end{lemma}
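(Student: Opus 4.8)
The plan is to prove the lemma by a direct computation: differentiate $I$ and $J$ along a solution curve, parametrized by arc length, and then use \eqref{equazione-biconservative} to eliminate $\dot\alpha$ from the resulting expressions. The point is that the exponents $q/3$ and $p/3$ in \eqref{definizionediIeJ} are tailored precisely so that, after this substitution, the $\dot\alpha$-terms cancel and what remains is manifestly non-negative.

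Concretely, I would first recall from \eqref{angoloalfa} that $\dot x=\cos\alpha$ and $\dot y=\sin\alpha$, and solve \eqref{equazione-biconservative} for $\dot\alpha$, obtaining
\[
\dot\alpha=-\frac{p}{3}\,\frac{\sin\alpha}{x}+\frac{q}{3}\,\frac{\cos\alpha}{y}\,.
\]
Differentiating $I=y^{q/3}\cos\alpha$ gives $\dot I=y^{q/3-1}\sin\alpha\bigl(\tfrac{q}{3}\cos\alpha-y\dot\alpha\bigr)$; substituting $y\dot\alpha=-\tfrac{p}{3}\tfrac{y\sin\alpha}{x}+\tfrac{q}{3}\cos\alpha$ makes the $\tfrac{q}{3}\cos\alpha$ terms cancel, leaving
\[
\dot I=\frac{p}{3}\,\frac{y^{q/3}\,\sin^2\alpha}{x}\ \geq\ 0\,,
\]
since $x,y>0$ on the interval of definition. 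Symmetrically, differentiating $J=x^{p/3}\sin\alpha$ yields $\dot J=x^{p/3-1}\cos\alpha\bigl(\tfrac{p}{3}\sin\alpha+x\dot\alpha\bigr)$, and substituting $x\dot\alpha=-\tfrac{p}{3}\sin\alpha+\tfrac{q}{3}\tfrac{x\cos\alpha}{y}$ cancels the $\tfrac{p}{3}\sin\alpha$ terms, so that
\[
\dot J=\frac{q}{3}\,\frac{x^{p/3}\,\cos^2\alpha}{y}\ \geq\ 0\,.
\]
Hence $I$ and $J$ are non-decreasing along every solution of \eqref{equazione-biconservative}.

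There is essentially no obstacle in this argument: it is a one-line differentiation followed by the substitution of the defining ODE. The only things worth stressing are that the parametrization is by arc length (so that \eqref{angoloalfa} applies) and that $x,y$ remain positive, which both controls the sign and keeps the denominators harmless. One should also record the slightly sharper fact that $\dot I>0$ strictly wherever $\sin\alpha\neq0$ and $\dot J>0$ strictly wherever $\cos\alpha\neq0$, since it is in this strict form that the monotonicity of $I$ and $J$ is exploited later in the qualitative analysis of the profile curves.
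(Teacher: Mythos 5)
Your computation is correct and coincides with the paper's own proof: both differentiate $I$ and $J$ using $\dot x=\cos\alpha$, $\dot y=\sin\alpha$, substitute $\dot\alpha$ from \eqref{equazione-biconservative}, and arrive at exactly the expressions $\dot I=\frac{p}{3}\,\frac{\sin^2\alpha}{x}\,y^{q/3}\geq 0$ and $\dot J=\frac{q}{3}\,\frac{\cos^2\alpha}{y}\,x^{p/3}\geq 0$. Your closing remark on strict positivity where $\sin\alpha\neq0$ (resp.\ $\cos\alpha\neq0$) is a harmless and useful addition, but the argument is essentially identical to the paper's.
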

\begin{proof}We compute
\begin{eqnarray}\label{calcoloIpunto}
  \dot{I} &=& \frac{q}{3}\, y^{(q \slash 3)-1} \,\dot{y} \, \cos \alpha - y^{(q \slash 3)}\, \sin \alpha \, \dot{\alpha}\\ \nonumber
   &=&  \frac{p}{3}\, \frac {\sin^2 \alpha}{x} \,y^{(q \slash 3)} \, \geq \, 0 \,\, ,
\end{eqnarray}
where, in order to obtain the second equality, we have used \eqref{equazione-biconservative} and \eqref{angoloalfa}. Similarly, we compute
\begin{equation}\label{calcoloJpunto}
    \dot{J}=\, \frac{q}{3}\, \frac {\cos^2 \alpha}{y} \,x^{(p \slash 3)} \, \geq \, 0 \,\, .
\end{equation}

\end{proof}
Next, we observe that \eqref{equazione-biconservative} is invariant by homotheties: in other words, if $\gamma(s)$ is a solution of \eqref{equazione-biconservative}, so is $\gamma_c(s)=(1 \slash c)\, \gamma(cs)$, $\forall \, c\, \neq 0$ . This invariance suggests to study the qualitative behaviour of solutions in the $(\vartheta,\,\alpha)$-plane, where the angle $\vartheta$ is related to $x,\,y$ by means of the usual polar coordinate transformation:
\begin{equation}\label{angolotheta}
    \left \{ \begin{array}{l}
              x=r\,\cos \vartheta \\
              y=r\,\sin \vartheta \,\, ,\,\, 0 < \vartheta < (\pi \slash 2) \,\, .
            \end{array}
\right .
\end{equation}

\begin{lemma} Let us consider the following vector field
\begin{equation}\label{campodivettoripianoalfatheta-X}
X(\vartheta,\alpha)=
 3\,(\,\cos \vartheta \,\sin \vartheta\,\sin (\alpha-\vartheta)\,)\,\frac{\partial}{\partial \vartheta} +(\,p\,\sin \alpha\,\sin \vartheta-q\,\cos \alpha\,\cos \vartheta)\,\frac{\partial}{\partial \alpha}
\end{equation}
in the $(\vartheta,\,\alpha)$-plane. The solutions of \eqref{equazione-biconservative} correspond to the trajectories of $X(\vartheta,\alpha)$, $0 < \vartheta < (\pi \slash 2)$, that is they are solutions of the following first order differential system:
\begin{equation}\label{campodivettoripianoalfatheta}
    \left \{ \begin{array}{l}
              \dot{\vartheta}=3\,\sin \vartheta \, \cos \vartheta \, \sin (\alpha-\vartheta) \\
              \dot{\alpha}=q\,\cos \alpha \, \cos \vartheta \,-\, p\,\sin \alpha \, \sin \vartheta \,\, .
            \end{array}
\right .
\end{equation}
\end{lemma}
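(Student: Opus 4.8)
The plan is to translate equation \eqref{equazione-biconservative} into polar coordinates \eqref{angolotheta} and read off the dynamics of the pair $(\vartheta,\alpha)$ directly. First I would recall that, along the profile curve parametrized by arc length, $\dot x=\cos\alpha$ and $\dot y=\sin\alpha$ by \eqref{angoloalfa}. Differentiating the polar relations $x=r\cos\vartheta$, $y=r\sin\vartheta$ gives
\begin{equation*}
\cos\alpha=\dot r\cos\vartheta-r\dot\vartheta\sin\vartheta,\qquad \sin\alpha=\dot r\sin\vartheta+r\dot\vartheta\cos\vartheta,
\end{equation*}
and solving this linear system (or, equivalently, taking the appropriate linear combinations) yields $\dot r=\cos(\alpha-\vartheta)$ and $r\dot\vartheta=\sin(\alpha-\vartheta)$. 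This is the elementary kinematic identity for a unit-speed plane curve in polar form; it is the only computation of substance and it is routine.

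Next I would substitute $x=r\cos\vartheta$ and $y=r\sin\vartheta$ into \eqref{equazione-biconservative}, clear the common factor $1/r$, and obtain
\begin{equation*}
3\dot\alpha+\frac{p\sin\alpha}{r\cos\vartheta}-\frac{q\cos\alpha}{r\sin\vartheta}=0
\quad\Longleftrightarrow\quad
3r\dot\alpha=\frac{q\cos\alpha\cos\vartheta-p\sin\alpha\sin\vartheta}{\cos\vartheta\sin\vartheta}.
\end{equation*}
Since we are free to reparametrize by homotheties — as noted just before the lemma, $\gamma_c(s)=(1/c)\gamma(cs)$ is again a solution — the factor $r$ is harmless: rescaling the parameter $s$ so that, in effect, $r\equiv 1$ along the reparametrized curve (or simply multiplying the vector field by the positive function $r$, which does not change its trajectories) lets us drop all dependence on $r$. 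Concretely, using $r\dot\vartheta=\sin(\alpha-\vartheta)$ one has $\dot\vartheta=\sin(\alpha-\vartheta)/r$, and multiplying the right-hand sides of both equations by $3r\sin\vartheta\cos\vartheta$ produces exactly
\begin{equation*}
\dot\vartheta=3\sin\vartheta\cos\vartheta\sin(\alpha-\vartheta),\qquad
\dot\alpha=q\cos\alpha\cos\vartheta-p\sin\alpha\sin\vartheta,
\end{equation*}
which is the system \eqref{campodivettoripianoalfatheta}; the vector field $X$ in \eqref{campodivettoripianoalfatheta-X} is just its coordinate expression. Because multiplying a vector field by a strictly positive function (here $3r\sin\vartheta\cos\vartheta>0$ on the region $0<\vartheta<\pi/2$) only changes the speed of parametrization and not the orbits, the trajectories of $X$ coincide with the images of solutions of \eqref{equazione-biconservative}, and conversely every trajectory of $X$ in the strip lifts to a solution curve. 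That establishes the claimed correspondence.

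The only point requiring a little care — the "hard part," such as it is — is the bookkeeping around the reparametrization: \eqref{equazione-biconservative} is an equation for an arc-length-parametrized curve, whereas \eqref{campodivettoripianoalfatheta} is an autonomous system whose time parameter is \emph{not} arc length. I would therefore be explicit that the passage between the two is at the level of \emph{trajectories} (unparametrized orbits), invoking the homothety invariance to justify that no information is lost, rather than claiming the two parametrizations agree. Everything else — the polar kinematic identities and the algebraic substitution — is a direct computation.
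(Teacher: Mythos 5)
Your proposal is correct and follows essentially the same route as the paper: differentiate the polar relations to get $r\,d\vartheta=\sin(\alpha-\vartheta)\,ds$, substitute into \eqref{equazione-biconservative}, and clear the positive factor $3r\sin\vartheta\cos\vartheta$ to obtain \eqref{campodivettoripianoalfatheta} (the paper phrases this last step as the vanishing of the one-form \eqref{campodivettoricomeunoforma}, which is the same trajectory-level identification you make explicitly). Your remark that the correspondence is between unparametrized orbits rather than arc-length parametrizations is exactly the point the paper leaves implicit, so nothing is missing.
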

\begin{proof}We use the following equalities:
\begin{equation}\label{cambiovariabilealfatheta}
    {\rm (i)}\,\, \frac{d \alpha}{d \vartheta}=\, \dot{\alpha}\,\frac{d s}{d \vartheta} \qquad {\rm (ii)}\,\,  \frac{d  \vartheta}{ d s}= \, \frac{\sin (\alpha-\vartheta)}{r} \,\, .
\end{equation}
The equality \eqref{cambiovariabilealfatheta}(i) is obvious; as for \eqref{cambiovariabilealfatheta}(ii), we use \eqref{angoloalfa} and observe that differentiation in \eqref{angolotheta} yields:
\begin{equation}\label{differentiation}
    \left \{ \begin{array}{l}
              \cos \alpha \, ds \,(=dx)\,=\,\cos \vartheta \, dr - r\, \sin \vartheta \, d\vartheta \\
              \sin \alpha \, ds \,(=dy)\,=\,\sin \vartheta \, dr + r\, \cos \vartheta \, d\vartheta  \,\, .
            \end{array}
\right .
\end{equation}
Next, we multiply the first equation in \eqref{differentiation} by $ -\,\sin \vartheta$, the second equation in \eqref{differentiation} by $ \cos \vartheta$ and then we add them to obtain
\begin{eqnarray*}
  r \, d\vartheta &=& (\sin \alpha \, \cos \vartheta \,-\, \cos \alpha \, \sin \vartheta) \,ds\\ \nonumber
   &=& \sin (\alpha-\vartheta)\,ds \,\, ,
\end{eqnarray*}
 from which \eqref{cambiovariabilealfatheta}(ii) follows immediately. Next, using \eqref{angolotheta} and \eqref{cambiovariabilealfatheta}(i), (ii) we find that \eqref{equazione-biconservative} becomes
 \begin{equation*}
    3 \,\frac{d \alpha}{d \vartheta} \, \frac{\sin (\alpha-\vartheta)}{r}+p\,\frac{\sin \alpha}{r \,\cos \vartheta}-q\,\frac{\cos \alpha}{r\,\sin \vartheta}=0\,\, ,
 \end{equation*}
 which we rewrite as:
 \begin{equation}\label{campodivettoricomeunoforma}
   3\,(\,\cos \vartheta \,\sin \vartheta\,\sin (\alpha-\vartheta)\,)\,d\alpha +(\,p\,\sin \alpha\,\sin \vartheta-q\,\cos \alpha\,\cos \vartheta)\,d\vartheta =0 \,\,.
 \end{equation}
 Finally, \eqref{campodivettoripianoalfatheta} follows readily from \eqref{campodivettoricomeunoforma}.

\end{proof}
\begin{remark} We point out that to each trajectory of the vector field $X(\vartheta,\alpha)$ corresponds a family of homothetic solutions of \eqref{equazione-biconservative}. We also notice that, though solutions of \eqref{campodivettoripianoalfatheta} are defined for all $s \,\in \,\R$, some care is needed to go back from these curves in the $(\vartheta,\,\alpha)$-plane to solutions of \eqref{equazione-biconservative} in the orbit space $Q$. To make this statement more explicit, it is enough to examine more in detail the minimal cone introduced in Remark~\ref{re:specialsolutionofsystembiconservative}, which corresponds to:
\begin{equation}\label{puntocriticochiave}
    \vartheta (s) \equiv  \alpha_0\,\, ; \qquad \alpha (s) \equiv \alpha_0 \,\, ,
\end{equation}
where $\alpha_0=\arctan \sqrt {(q \slash p)}$ .
In the orbit space $Q$, this trajectory becomes  the half-line $\gamma(s)=((\cos \alpha_0 \, )\,s,\,(\sin \alpha_0 \, )\,s)$ which, at $s=0$, reaches the boundary of $Q$ at the origin.
\end{remark}
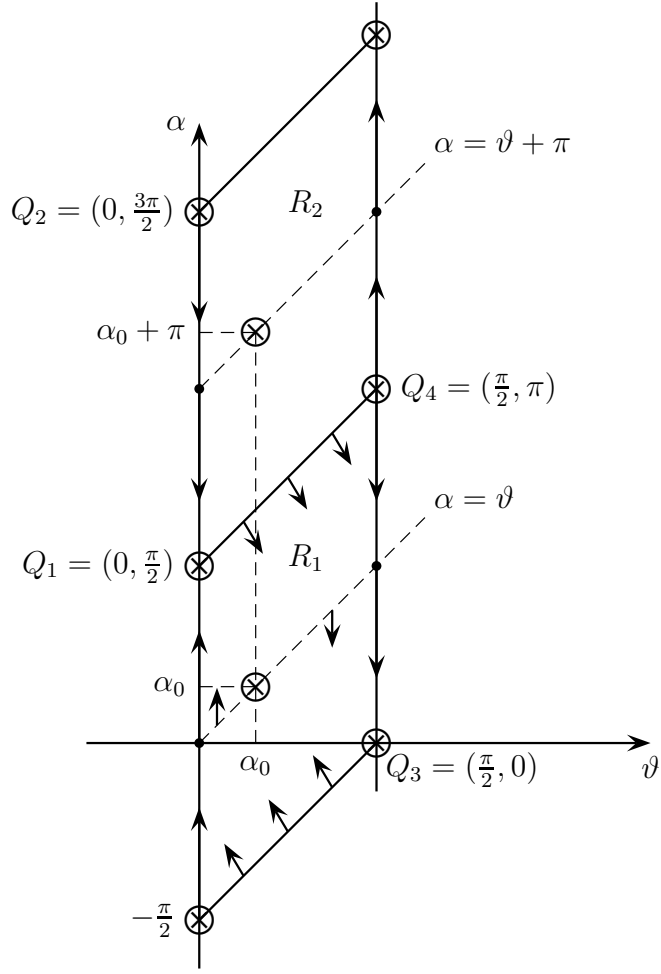
\begin{figure}[!htcb]
 \begin{center}
 \psset{unit=1.5cm,linewidth=.3mm,arrowscale=2}
\begin{pspicture}(-3,-2)(5,6.5)
\psline{->}(-1,0)(4,0)
\psline{->}(0,-2)(0,5.5)
\uput[-90](4,0){$\vartheta$}
\uput[180](0,5.5){$\alpha$}

\psline{->}(0,0)(0,1)
\psline{->}(0,-1.5708)(0,-.5708)
\psline{->}(0,3.14159)(0,2.14159)
\psline{->}(0,4.71239)(0,3.71239)

\psline{->}(1.5708,1.5708)(1.5708,.5708)
\psline{->}(1.5708,3.14159)(1.5708,2.14159)
\psline{->}(1.5708,3.14159)(1.5708,4.14159)
\psline{->}(1.5708,4.71239)(1.5708,5.71239)

\psline{}(1.5708,-0.429204)(1.5708,6.5708)

\psdot[dotstyle=otimes,dotscale=3](0,-1.5708)
\psdot[dotstyle=otimes,dotscale=3](0,1.5708)
\psdot[dotstyle=otimes,dotscale=3](1.5708,0)
\psdot[dotstyle=otimes,dotscale=3](1.5708,3.14159)
\psdot[dotstyle=otimes,dotscale=3](0,4.71239)
\psdot[dotstyle=otimes,dotscale=3](1.5708,6.28319)
\psdot[dotstyle=otimes,dotscale=3](.5,.5)
\psdot[dotstyle=otimes,dotscale=3](.5,3.64159)

\uput[180](-.1,-1.5708){$-\frac{\pi}{2}$}
\uput[180](-.1,1.5708){$Q_1=(0,\frac{\pi}{2})$}
\uput[180](-.1,4.71239){$Q_2=(0,\frac{3\pi}{2})$}
\uput[315](1.5708,0){$Q_3=(\frac{\pi}{2},0)$}
\uput[0](1.6708,3.14159){$Q_4=(\frac{\pi}{2},\pi)$}

\psdot[dotscale=1](0,3.14159)
\psdot[dotscale=1](1.5708,4.71239)
\psdot[dotscale=1](0,0)
\psdot[dotscale=1](1.5708,1.5708)

\psline[linestyle=dashed,linewidth=.16mm](0,3.14159)(2,5.14159)
\psline[linestyle=dashed,linewidth=.16mm](0,0)(2,2)
\uput[45](2,2){$\alpha=\vartheta$}
\uput[45](2,5.14159){$\alpha=\vartheta+\pi$}

\psline(0,-1.5708)(1.5708,0)
\psline(0,1.5708)(1.5708,3.14159)
\psline(0,4.71239)(1.5708,6.28319)
\psline[linestyle=dashed,linewidth=.16mm](.5,0)(.5,3.64159)
\psline[linestyle=dashed,linewidth=.16mm](0,.5)(.5,.5)
\psline[linestyle=dashed,linewidth=.16mm](0,3.64159)(.5,3.64159)

\uput[270](.5,0){$\alpha_0$}
\uput[180](0,.5){$\alpha_0$}
\uput[180](0,3.64159){$\alpha_0+\pi$}

\put(0.785398, 4.71239){$R_2$}

\put(0.785398,1.5708){$R_1$}

\psline{->}(0.392699, -1.1781)(0.2212, -0.892266)
\psline{->}(0.785398, -0.785398)(0.6139, -0.499567)
\psline{->}(1.1781, -0.392699)(1.0066, -0.106868)

\psline{->}(0.15708, 0.15708)(0.15708, 0.490413)
\psline{->}(1.1781, 1.1781)(1.1781, 0.844764)

\psline{->}(0.392699, 1.9635)(0.564198, 1.67766)
\psline{->}(0.785398, 2.35619)(0.956897, 2.07036)
\psline{->}(1.1781, 2.74889)(1.3496, 2.46306)
\end{pspicture}
     
  \end{center}
  \caption{The vector field $X(\vartheta,\alpha)$ in the $(\vartheta,\,\alpha)$-plane}\label{figuracampovettori}
\end{figure}
First, it is useful to observe that the vector field $X(\vartheta,\alpha)$ is defined on the region $0\leq \vartheta \leq (\pi/2)$. We also note that, if we consider a point with either $\vartheta=0$ or $\vartheta=\pi/2$, then
the integral curve passing through it is either a vertical segment or a point.

Now we proceed to the study of the qualitative behaviour of the trajectories of the vector field \eqref{campodivettoripianoalfatheta-X}. Since $0 \leq \vartheta \leq (\pi \slash 2)$ and the vector field has period $T=2 \pi$ with respect to the variable $\alpha$, it is enough to study trajectories in the region $R=R_1 \bigcup R_2$, where $R_1$ and $R_2$ are the two parallelograms of Figure \ref{figuracampovettori}: the analytical description of these regions in the $(\vartheta,\,\alpha)$-plane is
\begin{equation*}
    R_1= \left \{(\vartheta,\,\alpha)\; \colon 0 \leq \vartheta \leq \frac{\pi}{2}, \; \vartheta -\frac{\pi}{2}\leq \alpha \leq \vartheta +\frac{\pi}{2}\right \} \,\, ,
\end{equation*}
\begin{equation*}
    R_2= \left \{(\vartheta,\,\alpha)\; \colon 0 \leq \vartheta \leq \frac{\pi}{2}, \; \vartheta +\frac{\pi}{2}\leq \alpha \leq \vartheta +\frac{3\,\pi}{2}\right \} \,\, .
\end{equation*}

As a first step, in the following lemma we analyze the stationary points of the vector field.
\begin{lemma}\label{lemmapuntistazionari}Let $\alpha_0=\arctan \sqrt {(q \slash p)}$ . The stationary points (in $R$) of the differential system \eqref{campodivettoripianoalfatheta} are:
\begin{eqnarray}\label{puntistazionari}
  &&  P_0=(\alpha_0,\, \alpha_0)\,\, , \quad P_1=(\alpha_0,\,  \alpha_0+\pi)\,\, , \quad Q_1=\left(0,\, \frac{\pi}{2}\right )\,\, , \quad
    Q_2=\left(0,\, \frac{3\,\pi}{2}\right )\,\, , \\ \nonumber
    && Q_3=\left( \frac{\pi}{2},\,0 \right )\,\, , \quad Q_4=\left( \frac{\pi}{2},\, \pi \right )\,\, .
\end{eqnarray}
Moreover, $P_0$ is a spiral sink if $(p+q)\leq 17$ , while it is a nodal sink if $(p+q)\geq 18$. $P_1$ is a spiral source if $(p+q)\leq 17$, while it is a nodal source if $(p+q)\geq 18$. All the other stationary points are saddle points.
\end{lemma}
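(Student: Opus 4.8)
The plan is to first find all the zeros of the right-hand side of the system \eqref{campodivettoripianoalfatheta} and then to read off the type of each from its Jacobian. Setting $\dot\vartheta=0$ and using $0\le\vartheta\le\pi/2$, one of three possibilities must occur: $\vartheta=0$, or $\vartheta=\pi/2$, or $\sin(\alpha-\vartheta)=0$. If $\vartheta=0$ the second equation becomes $q\cos\alpha=0$, so $\alpha\equiv\pi/2$ or $\alpha\equiv 3\pi/2$, which gives $Q_1$ and $Q_2$; if $\vartheta=\pi/2$ it becomes $-p\sin\alpha=0$, so $\alpha\equiv0$ or $\alpha\equiv\pi$, which gives $Q_3$ and $Q_4$ (the value $\alpha=2\pi$ being identified with $\alpha=0$). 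In the remaining case $\alpha=\vartheta$ or $\alpha=\vartheta+\pi$, and in both of them $\cos\alpha\cos\vartheta$ and $\sin\alpha\sin\vartheta$ reduce to $\pm\cos^2\vartheta$ and $\pm\sin^2\vartheta$ with a common sign, so the second equation becomes $q\cos^2\vartheta-p\sin^2\vartheta=0$, i.e.\ $\tan^2\vartheta=q/p$, i.e.\ $\vartheta=\alpha_0$; this produces exactly $P_0$ and $P_1$. A direct inspection shows that no two of these three alternatives can hold simultaneously at a point where $\dot\alpha=0$ as well, so \eqref{puntistazionari} is the complete list.

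For the $Q_j$ the computation is easy: there $\vartheta\in\{0,\pi/2\}$, so $\sin\vartheta\cos\vartheta=0$ and hence $\partial_\alpha\dot\vartheta=3\sin\vartheta\cos\vartheta\cos(\alpha-\vartheta)=0$, which means that the Jacobian of \eqref{campodivettoripianoalfatheta} is triangular at $Q_j$ and its eigenvalues are simply the two diagonal entries. An elementary evaluation gives, at $Q_1,Q_2,Q_3,Q_4$ respectively,
\[
\begin{pmatrix}3&0\\-p&-q\end{pmatrix},\qquad
\begin{pmatrix}-3&0\\p&q\end{pmatrix},\qquad
\begin{pmatrix}3&0\\-q&-p\end{pmatrix},\qquad
\begin{pmatrix}-3&0\\q&p\end{pmatrix},
\]
and since $p,q>0$ the two eigenvalues have opposite signs in each of the four cases, so every $Q_j$ is a hyperbolic saddle.

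At $P_0$ and $P_1$ we have $\vartheta=\alpha_0$ and $\sin(\alpha-\vartheta)=0$, $\cos(\alpha-\vartheta)=\pm1$. Putting $k=\tfrac{3}{2}\sin(2\alpha_0)>0$ and $m=\tfrac{p+q}{2}\sin(2\alpha_0)>0$, one finds that the Jacobian of \eqref{campodivettoripianoalfatheta} equals
\[
\begin{pmatrix}-k&k\\-m&-m\end{pmatrix}\text{ at }P_0,\qquad
\begin{pmatrix}k&-k\\m&m\end{pmatrix}\text{ at }P_1.
\]
At $P_0$ the trace $-(k+m)$ is negative and the determinant $2km$ positive, so $P_0$ is a sink; at $P_1$ the signs are reversed, so $P_1$ is a source. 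The discriminant of the characteristic polynomial is $(k+m)^2-8km=k^2-6km+m^2$, and, dividing by $k^2$ and setting $t=m/k=(p+q)/3$, it has the sign of $t^2-6t+1$; this is negative precisely when $t\in(3-2\sqrt2,\,3+2\sqrt2)$. The lower bound is automatic, while $t<3+2\sqrt2$ is equivalent to $p+q<9+6\sqrt2$. Therefore the eigenvalues at $P_0$ (resp.\ $P_1$) are non-real, so that $P_0$ is a spiral sink and $P_1$ a spiral source, exactly when $p+q\le17$; and they are real, so that $P_0$ is a nodal sink and $P_1$ a nodal source, when $p+q\ge18$.

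The one step that I expect to require genuine care is the last numerical comparison: one must check that $9+6\sqrt2=17.485\ldots$ lies strictly between the two consecutive integers $17$ and $18$, which is exactly what makes the dichotomy in the statement sharp; moreover, since $3\pm2\sqrt2$ is irrational, the borderline degenerate (improper node) case never occurs for an integer value of $p+q$. Everything else is routine trigonometric bookkeeping, and, since all six stationary points turn out to be hyperbolic, the linearized classification faithfully reflects the local phase portrait (Hartman--Grobman), the spiral-versus-node alternative being read directly from the eigenvalues of the linear part.
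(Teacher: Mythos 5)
Your proposal is correct and follows essentially the same route as the paper: enumerate the zeros of the vector field by direct inspection, evaluate the Jacobian at each, and classify $P_0,P_1$ via the discriminant (complex eigenvalues iff $p+q<9+6\sqrt{2}\approx 17.49$) and the $Q_i$ as saddles; you merely spell out the computations the paper calls ``easy to check.'' Note that your Jacobian at $P_0$, namely $\sin\alpha_0\cos\alpha_0\left(\begin{smallmatrix}-3&3\\-(p+q)&-(p+q)\end{smallmatrix}\right)$, is the negative of the matrix displayed in the paper's \eqref{jacobianainPzero}; yours is the correct one (the paper's display has a sign slip, since its stated conclusion of negative real parts requires negative trace), so your version is the one consistent with the lemma's conclusions.
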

\begin{proof} The list \eqref{puntistazionari} of stationary points in $R$ can easily be obtained by direct inspection of \eqref{campodivettoripianoalfatheta}. Next, one has to evaluate the Jacobian matrix $J$ at each of the stationary points. At $P_0$ we find
\begin{equation}\label{jacobianainPzero}
    J(P_0)= \sin \alpha_0 \, \cos \alpha_0 \, \,\left[
             \begin{array}{cc}
               3 & -3 \\
               (p+q) & (p+q) \\
             \end{array}
           \right ]
\end{equation}
Now it is easy to check that, if $(p+q)\leq 17$, the eigenvalues of $J(P_0)$ are complex conjugate with negative real part (spiral sink), while, if $(p+q)\geq 18$, the eigenvalues of $J(P_0)$ are both real and negative (nodal sink). Similarly, a simple computation shows that $J(P_1)= - \, J(P_0)$: then it is again easy to conclude that, if $(p+q)\leq 17$, the eigenvalues of $J(P_1)$ are complex conjugate with positive real part (spiral source), while if $(p+q)\geq 18$ the eigenvalues of $J(P_1)$ are both real and positive (nodal source). As for the points $Q_i$, $i=1, \dots,4$, we have two real eigenvalues of opposite sign (saddle points).

\end{proof}
\begin{lemma}\label{permanenzatraiettorieinRuno} Let $(\vartheta(s),\alpha(s))$ be a trajectory of the differential system \eqref{campodivettoripianoalfatheta}. Suppose that, for some $s_0$, the trajectory is in $R_1$: $0<\vartheta(s_0)<(\pi \slash 2),\vartheta(s_0)-(\pi \slash 2)\leq\alpha(s_0)\leq\vartheta(s_0)+(\pi \slash 2)$. Then the trajectory remains in $R_1$ for all $s \geq s_0$.
\end{lemma}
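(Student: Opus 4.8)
The plan is to show that $R_1$ is forward-invariant by checking that along the boundary $\partial R_1$ the vector field $X(\vartheta,\alpha)$ either points inward or is tangent to the boundary, so that no trajectory can exit. The boundary of $R_1$ consists of four pieces: the two vertical segments $\{\vartheta=0\}$ and $\{\vartheta=\pi/2\}$, and the two slanted segments $\{\alpha=\vartheta-\pi/2\}$ and $\{\alpha=\vartheta+\pi/2\}$. I will treat each piece in turn.

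First, on the vertical sides $\vartheta=0$ and $\vartheta=\pi/2$ we have $\dot\vartheta=3\sin\vartheta\cos\vartheta\sin(\alpha-\vartheta)=0$, since $\sin\vartheta\cos\vartheta$ vanishes there; hence these lines are themselves integral curves (they are vertical segments, as already noted just before Lemma~\ref{lemmapuntistazionari}), and by uniqueness for the ODE system \eqref{campodivettoripianoalfatheta} no trajectory starting strictly inside $R_1$ can cross them. Actually, since the hypothesis places $(\vartheta(s_0),\alpha(s_0))$ with $0<\vartheta(s_0)<\pi/2$, it suffices that the trajectory cannot reach $\vartheta=0$ or $\vartheta=\pi/2$ in finite forward time, which again follows from uniqueness applied to the invariant lines $\vartheta\equiv 0$ and $\vartheta\equiv\pi/2$.

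The main work is on the two slanted sides. On the upper side $\alpha=\vartheta+\pi/2$ one computes the derivative of the quantity $\alpha-\vartheta-\pi/2$ along the flow: $\frac{d}{ds}(\alpha-\vartheta)=\dot\alpha-\dot\vartheta$. On this side $\sin(\alpha-\vartheta)=\sin(\pi/2)=1$, so $\dot\vartheta=3\sin\vartheta\cos\vartheta>0$ for $0<\vartheta<\pi/2$, while $\dot\alpha=q\cos\alpha\cos\vartheta-p\sin\alpha\sin\vartheta=-q\sin\vartheta\cos\vartheta-p\cos\vartheta\sin\vartheta=-(p+q)\sin\vartheta\cos\vartheta<0$, using $\cos(\vartheta+\pi/2)=-\sin\vartheta$ and $\sin(\vartheta+\pi/2)=\cos\vartheta$. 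Hence $\frac{d}{ds}(\alpha-\vartheta)=-(p+q+3)\sin\vartheta\cos\vartheta<0$ on the open upper side, so $\alpha-\vartheta$ strictly decreases there and the trajectory moves into $R_1$ (where $\alpha-\vartheta\le\pi/2$). Symmetrically, on the lower side $\alpha=\vartheta-\pi/2$ we have $\sin(\alpha-\vartheta)=-1$, giving $\dot\vartheta=-3\sin\vartheta\cos\vartheta<0$ and, using $\cos(\vartheta-\pi/2)=\sin\vartheta$, $\sin(\vartheta-\pi/2)=-\cos\vartheta$, $\dot\alpha=q\sin\vartheta\cos\vartheta+p\cos\vartheta\sin\vartheta=(p+q)\sin\vartheta\cos\vartheta>0$, so $\frac{d}{ds}(\alpha-\vartheta)=(p+q+3)\sin\vartheta\cos\vartheta>0$; thus $\alpha-\vartheta$ strictly increases on the lower side and again the trajectory enters $R_1$. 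The corner points of $R_1$ lie among the stationary points $Q_1,\dots,Q_4$ of Lemma~\ref{lemmapuntistazionari} (together with points with $\vartheta=0$ or $\pi/2$), hence are fixed points and are never reached from the interior in finite time by uniqueness; alternatively one observes the trajectory cannot accumulate on a corner while crossing a side since the inward-pointing estimates above are strict on the open sides.

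Putting these together: a trajectory that is in the interior of $R_1$ at time $s_0$ cannot cross $\{\vartheta=0\}$ or $\{\vartheta=\pi/2\}$ by uniqueness, and cannot cross either slanted side because the vector field points strictly inward there; therefore it remains in $R_1$ for all $s\ge s_0$. The only subtlety, and the step requiring the most care, is the handling of the corners — one must make sure a trajectory does not "sneak out" through a vertex — but this is dispatched by the uniqueness argument applied to the stationary points, so there is no real obstacle; the proof is essentially a boundary sign computation.
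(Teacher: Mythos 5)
Your proof is correct and follows essentially the same route as the paper: the paper also argues that the vector field points into $R_1$ along the two slanted edges (asserted with reference to the phase portrait, which you make explicit via the sign of $\frac{d}{ds}(\alpha-\vartheta)$ there) and rules out reaching $\vartheta=0$ or $\vartheta=\pi/2$ by uniqueness, since those vertical lines are invariant. Your extra care at the corners is a harmless refinement of the same argument.
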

\begin{proof} At both the upper and the lower edge of $R_1$ the vector field \eqref{campodivettoripianoalfatheta} points towards the interior of $R_1$ (see also Figure \ref{figuracampovettori}). Thus the trajectory cannot cross these bounds. On the other hand, the trajectory cannot reach neither $\vartheta=0$ nor $\vartheta=(\pi \slash 2)$, otherwise it would be contradicted the principle of uniqueness: more precisely, by uniqueness, any trajectory through a point of the type $\vartheta=0$ (respectively, $\vartheta=(\pi \slash 2)$) remains on this vertical straight line, a fact which makes our proof completed.

\end{proof}
\begin{lemma}\label{permanenzatraiettorieinRdue} Let $(\vartheta(s),\alpha(s))$ be a trajectory of the vector field \eqref{campodivettoripianoalfatheta}. Suppose that, for some $s_0$, the trajectory is in $R_2$: $0<\vartheta(s_0)<(\pi \slash 2),\vartheta(s_0)+(\pi \slash 2)\leq\alpha(s_0)\leq\vartheta(s_0)+(3\pi \slash 2)$. Then the trajectory is in $R_2$ for all $s \leq s_0$.
\end{lemma}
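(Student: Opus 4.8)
The plan is to mirror, almost verbatim, the proof of Lemma~\ref{permanenzatraiettorieinRuno}, with forward time replaced by backward time. The point is that along the two slanted edges of $R_2$ the vector field \eqref{campodivettoripianoalfatheta} points \emph{outward}, so that, running $s$ backwards, a trajectory cannot leave $R_2$ through them; and, exactly as in Lemma~\ref{permanenzatraiettorieinRuno}, uniqueness prevents a trajectory with $0<\vartheta(s_0)<\pi/2$ from ever reaching the vertical edges $\vartheta=0$ and $\vartheta=\pi/2$, since the lines $\{\vartheta\equiv 0\}$ and $\{\vartheta\equiv\pi/2\}$ are invariant (there $\dot\vartheta=3\sin\vartheta\cos\vartheta\sin(\alpha-\vartheta)=0$).

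Concretely, I would set $\psi=\alpha-\vartheta$, so that $R_2=\{0\le\vartheta\le\pi/2,\ \pi/2\le\psi\le 3\pi/2\}$, and compute $\dot\psi=\dot\alpha-\dot\vartheta$ along the flow. On the lower edge $\psi=\pi/2$ one has $\cos\alpha=-\sin\vartheta$, $\sin\alpha=\cos\vartheta$, $\sin(\alpha-\vartheta)=1$, hence $\dot\psi=-(p+q+3)\sin\vartheta\cos\vartheta<0$ for $0<\vartheta<\pi/2$; on the upper edge $\psi=3\pi/2$ one has $\cos\alpha=\sin\vartheta$, $\sin\alpha=-\cos\vartheta$, $\sin(\alpha-\vartheta)=-1$, hence $\dot\psi=(p+q+3)\sin\vartheta\cos\vartheta>0$. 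In both cases $\psi$ moves \emph{away} from the admissible band $[\pi/2,3\pi/2]$ as $s$ increases, i.e. into $R_2$ as $s$ decreases. A standard first-exit argument then closes the proof: if the trajectory failed to be in $R_2$ for some $s_{*}<s_0$, let $s_1$ be the supremum of such times; by continuity $(\vartheta(s_1),\alpha(s_1))\in\partial R_2$, and it cannot sit on a vertical edge or a corner (that would force $\vartheta\equiv 0$ or $\vartheta\equiv\pi/2$ by uniqueness, contradicting $0<\vartheta(s_0)<\pi/2$), hence it lies on one of the two slanted edges with $0<\vartheta(s_1)<\pi/2$; but then the sign of $\dot\psi(s_1)$ just computed forces the trajectory to be outside $R_2$ for $s$ slightly larger than $s_1$, contradicting the definition of $s_1$.

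Alternatively — and perhaps more transparently — I would exploit the symmetry $X(\vartheta,\alpha+\pi)=-X(\vartheta,\alpha)$ of the vector field \eqref{campodivettoripianoalfatheta-X}: under the translation $(\vartheta,\alpha)\mapsto(\vartheta,\alpha+\pi)$ the parallelogram $R_1$ is carried onto $R_2$ while $X$ is carried onto $-X$, so the trajectories of $X$ in $R_2$ are exactly the time-reversals of the trajectories of $X$ in $R_1$. The forward invariance of $R_1$ established in Lemma~\ref{permanenzatraiettorieinRuno} then becomes precisely the asserted backward invariance of $R_2$.

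I do not expect a genuine obstacle here; the only mildly delicate points are the trigonometric bookkeeping on the two edges (so that the signs of $\dot\psi$ come out as claimed) and the verification that the first-exit point lies on a slanted edge rather than on a vertical edge or at a corner — both of which, as in Lemma~\ref{permanenzatraiettorieinRuno}, reduce to the invariance of $\{\vartheta=0\}$ and $\{\vartheta=\pi/2\}$ together with uniqueness of solutions.
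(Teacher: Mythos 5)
Your proof is correct and follows essentially the same route as the paper, which simply declares the argument analogous to Lemma~\ref{permanenzatraiettorieinRuno}: you check that along the slanted edges of $R_2$ the field points outward (your sign computations for $\dot\psi$ at $\psi=\pi/2$ and $\psi=3\pi/2$ are right), and you use invariance of $\{\vartheta=0\}$ and $\{\vartheta=\pi/2\}$ plus uniqueness to rule out exit through the vertical edges and corners. Your closing observation that $X(\vartheta,\alpha+\pi)=-X(\vartheta,\alpha)$ carries $R_1$ onto $R_2$ with time reversed is a correct and pleasantly economical way to deduce the lemma directly from Lemma~\ref{permanenzatraiettorieinRuno}.
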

\begin{proof}The proof is analogous to that of Lemma \ref{permanenzatraiettorieinRuno}, so we omit the details.
\end{proof}
\begin{remark}\label{soluzioninonvannosugliassi} As a consequence of the arguments used in the proof of the previous two lemmata, we point out an important property of solutions of \eqref{equazione-biconservative}. Namely, if $(x(s),y(s))$ is a solution of \eqref{equazione-biconservative} in the orbit space $Q$, then it can reach the boundary of $Q$ at the origin only: more precisely, all the other boundary points are not allowed because the corresponding trajectory in the $(\vartheta,\,\alpha)$-plane would reach either the locus $\vartheta=0$ or the locus $\vartheta=(\pi \slash 2)$, a fact which is not possible, as explained above.
\end{remark}

\begin{figure}[!htcb]
 \begin{center}

\psset{unit=2cm,linewidth=.3mm,arrowscale=2}
\begin{pspicture}(-3,-2)(4,4)
\psline{->}(-1,0)(3,0)
\psline{->}(0,-2)(0,3.5)
\uput[-90](3,0){$\vartheta$}
\uput[180](0,3.5){$\alpha$}

\pscurve(0.,  1.5708 )
(0.15708  ,1.33754   )
(0.314159,  1.1173    )
(0.471239 , 0.918213  )
(0.628319,  0.742448  )
(0.785398,  0.588003   )
(0.942478 , 0.451059    )
(1.09956,  0.327455    )
(1.25664 , 0.213318    )
(1.41372,  0.1052    )
(1.5708  ,0. )

\psline{}(1.5708,-0.429204)(1.5708,3.5708)

\uput[180](-.1,-1.5708){$-\frac{\pi}{2}$}
\uput[180](-.1,1.5708){$\frac{\pi}{2}$}

\uput[315](1.5708,0){$\frac{\pi}{2}$}

\psdot[dotscale=1](0,0)
\psdot[dotscale=1](.685,.685)
\psdot[dotscale=1](1.5708,1.5708)

\psline[linestyle=dashed,linewidth=.16mm](0,0)(2,2)
\uput[45](2,2){$\alpha=\vartheta$}

\psline(0,-1.5708)(1.5708,0)
\psline(0,1.5708)(1.5708,3.14159)

\psline[linestyle=dashed,linewidth=.16mm](.685,0)(.685,.685)
\psline[linestyle=dashed,linewidth=.16mm](0,.685)(.685,.685)

\uput[270](.685,0){$\alpha_0$}
\uput[180](0,.685){$\alpha_0$}

\put(0.385398, -0.785398){$T_3$}

\put(0.785398,1.5708){$T_1$}
\put(1.3,.9){$T_2$}
\put(.1,.9){$T_4$}
\psline[linewidth=.1mm]{->}(2.,1.)(1.25664, 0.213318)
\uput[0](2,1){$\alpha=g(\vartheta)$}
\psdot[dotscale=1](1,1)
\uput[100](1,1){$w_1$}
\psdot[dotscale=1](.9,0.486604)
\uput[0](.9,0.486604){$w_2$}
\psdot[dotscale=1](0.53, 0.53)
\uput[180](0.53, 0.53){$w_3$}
\psdot[dotscale=1](0.59, 0.783236)
\uput[55](0.59, 0.783236){$w_4$}

\pscurve[curvature=0.3 0.3 0.3](1, 1)(.9,0.486604)(0.53, 0.53)(0.59, 0.783236)
\end{pspicture}

  \end{center}
  \caption{Non-existence of periodic orbits: case $p>q$.}\label{figuranoperiodichepmaggioreq}
\end{figure}
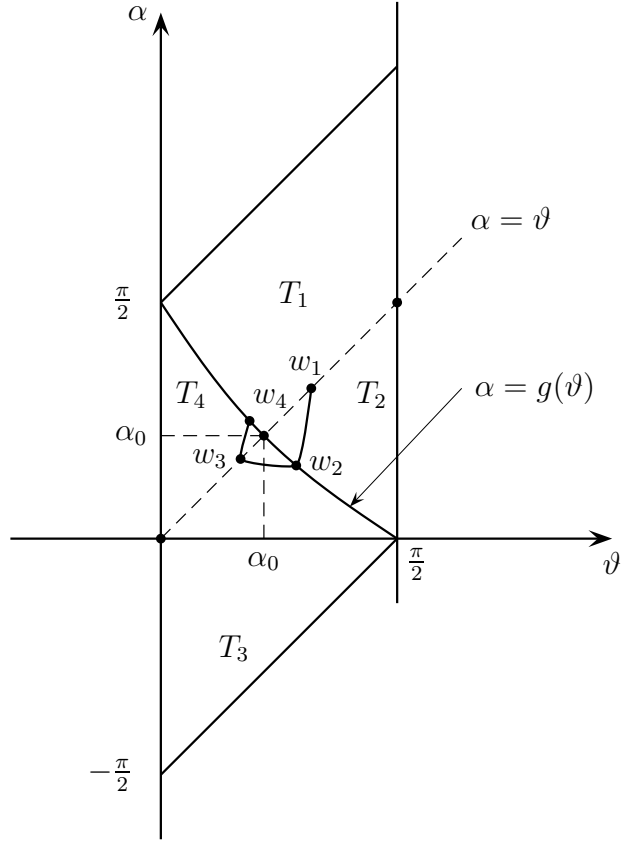

\begin{figure}[!htcb]
 \begin{center}

\psset{unit=2cm,linewidth=.3mm,arrowscale=2}
\begin{pspicture}(-3,-2)(4,4)
\psline{->}(-1,0)(3,0)
\psline{->}(0,-2)(0,3.5)
\uput[-90](3,0){$\vartheta$}
\uput[180](0,3.5){$\alpha$}

\psline{}(1.5708,-0.429204)(1.5708,3.5708)

\uput[180](-.1,-1.5708){$-\frac{\pi}{2}$}
\uput[180](-.1,1.5708){$\frac{\pi}{2}$}

\uput[315](1.5708,0){$\frac{\pi}{2}$}

\psdot[dotscale=1](0,0)
\psdot[dotscale=1](.886,.886)
\psdot[dotscale=1](1.5708,1.5708)

\psline[linestyle=dashed,linewidth=.16mm](0,0)(2,2)
\uput[45](2,2){$\alpha=\vartheta$}

\pscurve(0.  , 1.5708)
(0.15708 ,  1.4656   )
(0.314159 ,  1.35748  )
(0.471239 ,  1.24334  )
(0.628319 ,  1.11974  )
(0.785398  , 0.982794)
(0.942478 ,  0.828349 )
(1.09956 ,  0.652583  )
(1.25664 ,  0.4535   )
(1.41372 ,  0.233252 )
(1.5708  , 0.  )

\psline(0,-1.5708)(1.5708,0)
\psline(0,1.5708)(1.5708,3.14159)

\psline[linestyle=dashed,linewidth=.16mm](.886,0)(.886,.886)
\psline[linestyle=dashed,linewidth=.16mm](0,.886)(.886,.886)

\uput[270](.886,0){$\alpha_0$}
\uput[180](0,.886){$\alpha_0$}

\put(0.385398, -0.785398){$T_3$}

\put(0.785398,1.5708){$T_1$}
\put(1.3,.9){$T_2$}
\put(.1,1){$T_4$}
\psline[linewidth=.1mm]{->}(2.,1.)(1.41372, 0.233252)
\uput[0](2,1){$\alpha=g(\vartheta)$}

\psdot[dotscale=1](0.314159,1.35748)
\uput[90](0.314159,1.35748){$w_1$}
\psdot[dotscale=1](1.23,1.23)
\uput[0](1.23,1.23){$w_2$}
\psdot[dotscale=1](1.11, 0.640076)
\uput[0](1.11, 0.640076){$w_3$}
\psdot[dotscale=1](.75,.75)
\uput[180](.75,.75){$w_4$}

\pscurve[curvature=0.3 0.3 0.3](0.314159,1.35748)(1.23,1.23)(1.11,0.640076)(.75, .75)
\end{pspicture}
  \end{center}
  \caption{Non-existence of periodic orbits: case $p<q$.}\label{figuranoperiodichepminoreq}
\end{figure}

\begin{lemma}\label{nienteorbiteperiodicheinRuno}  Let $(\vartheta(s),\alpha(s))$ be a trajectory of the differential system \eqref{campodivettoripianoalfatheta}. Suppose that, for some $s_0$, the trajectory is in $R_1$, with $0<\vartheta(s_0)<(\pi \slash 2),\vartheta(s_0)-(\pi \slash 2)\leq\alpha(s_0)\leq\vartheta(s_0)+(\pi \slash 2)$. Then
\begin{equation}\label{ProprietalimiteinRuno}
    \lim_{s\rightarrow +\infty} (\vartheta(s),\alpha(s)) \,=\, P_0 \,\, .
\end{equation}
\end{lemma}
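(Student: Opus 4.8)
The plan is to combine the three dynamical facts already established — trapping in $R_1$ (Lemma~\ref{permanenzatraiettorieinRuno}), the monotonicity of $I$ and $J$ along solutions (Lemma~\ref{monotoniadiIeJ}), and the classification of stationary points (Lemma~\ref{lemmapuntistazionari}) — and to rule out all limiting behaviour other than convergence to $P_0$. Since by Lemma~\ref{permanenzatraiettorieinRuno} the trajectory stays in the compact region $\overline{R_1}$ for all $s\geq s_0$, its $\omega$-limit set $\Omega$ is nonempty, compact, connected and invariant; it suffices to show $\Omega=\{P_0\}$. By Poincar\'e--Bendixson, $\Omega$ is either a stationary point, a periodic orbit, or a union of stationary points together with connecting orbits (a graphic). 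So I would dispose of these three possibilities in turn.

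First I would exclude periodic orbits. This is exactly what Figures~\ref{figuranoperiodichepmaggioreq} and \ref{figuranoperiodichepminoreq} are set up for: a closed orbit in $R_1$ would have to surround the unique interior equilibrium $P_0$ and cross the nullcline $\dot\vartheta=0$, i.e.\ the curve $\alpha=\vartheta$, at two points $w_3$ (going up, on the left of $P_0$) and $w_1$ (going down), as well as the $\dot\alpha=0$ nullcline $\alpha=g(\vartheta)$ at points $w_2,w_4$; tracking the signs of $\dot\vartheta$ and $\dot\alpha$ in the four sectors $T_1,\dots,T_4$ cut out by these two nullclines, one sees the orbit must wind monotonically around $P_0$, and then a Bendixson--Dulac argument (or directly the monotonicity of $I$, equivalently of $J$, which are \emph{strictly} increasing off the line $\alpha=\vartheta$ by \eqref{calcoloIpunto}--\eqref{calcoloJpunto}) forbids a closed loop: $I$ cannot return to its initial value. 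In fact the cleanest route is to note that $\dot I>0$ strictly whenever $\sin\alpha\neq 0$, so any nonconstant orbit staying in the open region $0<\vartheta<\pi/2$ cannot be periodic, and a constant orbit would be a stationary point.

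Next, the same strict monotonicity kills graphics and forces convergence to a single equilibrium. The only stationary points in $\overline{R_1}$ are $P_0$ (interior) and the saddles $Q_1,Q_3$ on the boundary edges $\vartheta=0$, $\vartheta=\pi/2$ (and their period-$2\pi$ translates); but the trajectory can never touch $\vartheta=0$ or $\vartheta=\pi/2$ by the uniqueness argument already used in Lemma~\ref{permanenzatraiettorieinRuno}, and the whole $\omega$-limit set, being invariant and contained in $\overline{R_1}$, would then have to touch those edges if it contained any $Q_i$ — contradiction. Hence $\Omega$ contains no boundary equilibrium, so by Poincar\'e--Bendixson (periodic orbits already excluded) $\Omega$ must be exactly $\{P_0\}$, and $(\vartheta(s),\alpha(s))\to P_0$ as $s\to+\infty$. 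To make the Poincar\'e--Bendixson application fully rigorous I would first check that the trajectory stays a positive distance away from the lines $\vartheta=0,\pi/2$ for $s\geq s_0$ (this follows since $\overline{R_1}$ minus a neighbourhood of those two saddles, intersected with the trajectory, is forward invariant once one notes that near $Q_1,Q_3$ the stable manifolds lie along the invariant vertical lines, so the trajectory is pushed away), giving a genuinely compact forward-invariant set avoiding the boundary.

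The main obstacle I anticipate is the periodic-orbit exclusion done carefully: one must verify that the two nullclines $\alpha=\vartheta$ and $\alpha=g(\vartheta):=\arctan\bigl(q\cos\vartheta/(p\sin\vartheta)\bigr)$ intersect inside $R_1$ only at $P_0$ and partition $R_1$ into the four regions $T_1,\dots,T_4$ with the flow directions as drawn, and that the cases $p>q$ and $p<q$ (Figures~\ref{figuranoperiodichepmaggioreq} and \ref{figuranoperiodichepminoreq}) really are the only configurations; the degenerate case $p=q$ (where $\alpha_0=\pi/4$ and the two nullclines are tangent-like at $P_0$) should be handled by the $\dot I>0$ argument, which is insensitive to the relative size of $p$ and $q$. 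Everything else is routine sign-chasing, and the homothety invariance noted before \eqref{angolotheta} guarantees that the qualitative picture in the $(\vartheta,\alpha)$-plane captures all of \eqref{equazione-biconservative}.
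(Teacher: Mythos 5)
Your overall skeleton (trapping in $\overline{R_1}$, Poincar\'e--Bendixson, exclusion of boundary equilibria from the $\omega$-limit set via the fact that the stable manifolds of $Q_1,Q_3$ lie in the invariant lines $\vartheta=0,\pi/2$) matches the paper's strategy, but the step that carries all the weight --- the non-existence of periodic orbits in $R_1$ --- is not correctly established. Your main mechanism is the strict monotonicity of $I$ and $J$ from Lemma~\ref{monotoniadiIeJ}, but $I=y^{q/3}\cos\alpha$ and $J=x^{p/3}\sin\alpha$ are \emph{not} functions on the $(\vartheta,\alpha)$-plane: they depend on $r$, and they are not invariant under the homotheties $\gamma\mapsto(1/c)\gamma(cs)$ that were quotiented out in passing to the system \eqref{campodivettoripianoalfatheta}. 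A periodic orbit of \eqref{campodivettoripianoalfatheta} encircling $P_0$ (where $\sin\alpha>0$, $\cos\alpha>0$) does not correspond to a closed curve in the orbit space $Q$; it corresponds to a solution of \eqref{equazione-biconservative} that winds around while $r$ drifts, and along such a solution $I$ and $J$ can increase strictly forever with no contradiction --- ``$I$ cannot return to its initial value'' is simply not required, because the underlying solution never returns either. The fallback you offer, ``a Bendixson--Dulac argument,'' is not carried out: no Dulac function is produced, and the naive choice (constant) fails because the divergence of $X$ changes sign in $R_1$ (it is negative at $P_0$ but equals $3-q$ at $Q_1$ and $3-p$ at $Q_3$, hence positive near a corner when $p$ or $q$ is small).

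The paper closes exactly this gap with the sector argument you only gesture at: it partitions $R_1$ by the two nullclines $\alpha=\vartheta$ and $\alpha=g(\vartheta)$, $g(\vartheta)=\cot^{-1}((p/q)\tan\vartheta)$, notes that $g$ is decreasing with $g=g^{-1}$, and then tracks a hypothetical closed orbit through successive crossing points $w_1,\dots,w_4$ of the nullclines; using the signs of $\dot\vartheta,\dot\alpha$ in $T_1,\dots,T_4$ one derives (in the case $p>q$) $\alpha_4<g(g(\vartheta_1))=\vartheta_1=\alpha_1$, which together with $\dot\alpha<0$ in $T_1$ contradicts periodicity, with symmetric treatments for $p<q$ and $p=q$. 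To repair your proof you would need either to complete this quantitative crossing argument (merely noting that the orbit ``winds monotonically around $P_0$'' is not enough --- spiral sinks wind too), or to exhibit a genuine first integral/Dulac function for the \emph{reduced} system, e.g.\ a homothety-invariant combination of $I$ and $J$, whose monotonicity you would then have to verify; as written, neither is done.
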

\begin{proof} The Poincar\'e-Bendixson theory (\cite{Hartman}), together with Lemmata \ref{lemmapuntistazionari}, \ref{permanenzatraiettorieinRuno}, tell us that the conclusion follows if we prove that the differential system \eqref{campodivettoripianoalfatheta} has no periodic orbit in $R_1$ (apart from the stationary point $P_0$). To this purpose, it is convenient to divide the region $R_1$ into four subregions $T_i$, $i=1,\ldots,4$, as in Figures \ref{figuranoperiodichepmaggioreq} and \ref{figuranoperiodichepminoreq}: this partition of $R_1$ is obtained by considering the curves $\alpha = \vartheta$ and $\alpha = g(\vartheta)$, where
\begin{equation*}
 g(\vartheta)=\cot ^{-1} \left ( \, \frac{p}{q} \, \tan \vartheta \,\right ) \,\, .
\end{equation*}
It is important to note that these are precisely the two curves where $\dot{\vartheta}=0$ and $\dot{\alpha}=0$ respectively. In particular, in the interior of this four subregions we have:
\begin{itemize}
  \item [(i)] $\dot{\vartheta}>0$ and $\dot{\alpha}<0$ in $T_1$;
  \item [(ii)] $\dot{\vartheta}<0$ and $\dot{\alpha}<0$ in $T_2$;
  \item [(iii)] $\dot{\vartheta}<0$ and $\dot{\alpha}>0$ in $T_3$;
  \item [(iv)] $\dot{\vartheta}>0$ and $\dot{\alpha}>0$ in $T_4$.
\end{itemize}
By the general theory (see \cite{Hartman}) we know that, if there exists a periodic orbit in $R_1$, it must enclose the equilibrium point $P_0$ (actually, this statement is also a simple consequence of our partition of the region $R_1$). Assume first that $p>q$, so that we are in the situation of Figure \ref{figuranoperiodichepmaggioreq}. Preliminarily, it is useful to observe that the function $g(\vartheta)$ is strictly decreasing and $g(\vartheta)=g^{-1}(\vartheta)$, a fact which is a consequence of the symmetry of the curve
$$
q\,\cos \alpha \, \cos \vartheta \,-\, p\,\sin \alpha \, \sin \vartheta =0
$$
with respect to the bisector line $\alpha=\vartheta$. Now, if there is a periodic orbit, it must cross the subdividing curves, as $s$ increases, at a sequence of points $w_i=(\vartheta_i,\alpha_i)$, $i=1,\ldots,4$, in such a way that, keeping into account the signs of $\dot{\vartheta},\,\dot{\alpha}$ inside the various subregions,
\begin{equation}\label{disequazionenonorbiteperiodiche}
    \alpha_4 < g(g(\vartheta_1))=\vartheta_1=\alpha_1 \,\, :
\end{equation}
but the inequality in \eqref{disequazionenonorbiteperiodiche}, together with the periodicity, contradicts the fact that $\dot{\alpha}<0$ in $T_1$. The case $p<q$ (illustrated in Figure \ref{figuranoperiodichepminoreq}) can be handled similarly, starting with $w_1$ on the boundary between $T_1$ and $T_4$ and deriving $\vartheta_4 > \vartheta_1$, a fact which contradicts $\dot{\vartheta}>0$ in $T_4$. The case $p=q$ (in which $g(\vartheta)=(\pi \slash 2)- \vartheta$) can be treated in either way, so the proof is ended.

\end{proof}
In an analogous way, we also have (the details of the proof are precisely as in Lemma \ref{nienteorbiteperiodicheinRuno}):
\begin{lemma}\label{nienteorbiteperiodicheinRdue}  Let $=(\vartheta(s),\alpha(s))$ be a trajectory of the differential system \eqref{campodivettoripianoalfatheta}. Suppose that, for some $s_0$, the trajectory is in $R_2$, with $0<\vartheta(s_0)<(\pi \slash 2),\vartheta(s_0)+(\pi \slash 2)\leq\alpha(s_0)\leq\vartheta(s_0)+(3\,\pi \slash 2)$. Then
\begin{equation}\label{ProprietalimiteinRdue}
    \lim_{s\rightarrow -\infty} (\vartheta(s),\alpha(s)) \,=\, P_1 \,\, .
\end{equation}
\end{lemma}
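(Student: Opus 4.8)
The plan is to recognize that Lemma~\ref{nienteorbiteperiodicheinRdue} is nothing but the time-reversed counterpart of Lemma~\ref{nienteorbiteperiodicheinRuno}, and to deduce it from that lemma by exploiting a symmetry of the system \eqref{campodivettoripianoalfatheta}. The key remark is that the right-hand sides of \eqref{campodivettoripianoalfatheta} are $2\pi$-periodic in $\alpha$ and change sign under the shift $\alpha \mapsto \alpha - \pi$, since both $\sin$ and $\cos$ do. Hence the map $\Phi(\vartheta,\alpha) = (\vartheta,\alpha-\pi)$, performed together with the time reversal $s \mapsto -s$, carries trajectories of \eqref{campodivettoripianoalfatheta} to trajectories: explicitly, if $(\vartheta(s),\alpha(s))$ solves \eqref{campodivettoripianoalfatheta} then so does $(\vartheta(-s),\alpha(-s)-\pi)$. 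A direct check shows that $\Phi$ maps the region $R_2$ exactly onto $R_1$ and the stationary point $P_1 = (\alpha_0,\alpha_0+\pi)$ onto $P_0 = (\alpha_0,\alpha_0)$, which is also coherent with the relation $J(P_1) = -J(P_0)$ already observed in Lemma~\ref{lemmapuntistazionari}.

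Granting this, the steps I would carry out are: (i) verify the sign reversal of the vector field under $\alpha \mapsto \alpha - \pi$, which is immediate from the parities of $\sin$ and $\cos$; (ii) given a trajectory $(\vartheta(s),\alpha(s))$ that at some $s_0$ lies in $R_2$ with $0 < \vartheta(s_0) < \pi/2$, apply $\Phi$ and the time reversal to produce a trajectory that at $-s_0$ lies in $R_1$ and satisfies the hypotheses of Lemma~\ref{nienteorbiteperiodicheinRuno}; (iii) invoke Lemma~\ref{nienteorbiteperiodicheinRuno} to obtain that this transformed trajectory tends to $P_0$ as $s \to +\infty$; (iv) undo the substitution to conclude $\lim_{s\to-\infty}(\vartheta(s),\alpha(s)) = \Phi^{-1}(P_0) = P_1$, which is precisely \eqref{ProprietalimiteinRdue}.

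Alternatively --- and this is the route suggested by the phrasing of the statement --- one can simply repeat the proof of Lemma~\ref{nienteorbiteperiodicheinRuno} in backward time. By Lemma~\ref{permanenzatraiettorieinRdue} the trajectory stays in the compact region $R_2$ for all $s \le s_0$, so by the Poincar\'e--Bendixson theorem its $\alpha$-limit set is a stationary point, a periodic orbit, or a cycle of connecting orbits; by Lemma~\ref{lemmapuntistazionari} the unique interior stationary point of $R_2$ is $P_1$, which is a source, while the remaining stationary points sit on the invariant lines $\vartheta = 0$ and $\vartheta = \pi/2$ and cannot be reached from the interior. Hence everything reduces to excluding periodic orbits inside $R_2$, and this is done exactly as in Lemma~\ref{nienteorbiteperiodicheinRuno}: partition $R_2$ into four subregions by the nullclines $\{\dot\vartheta=0\} = \{\alpha = \vartheta + \pi\}$ and $\{\dot\alpha=0\} = \{\alpha = g(\vartheta) + \pi\}$, which meet precisely at $P_1$; read off the signs of $\dot\vartheta$ and $\dot\alpha$ in the four pieces; and, using that $g$ is strictly decreasing and self-inverse, show that a hypothetical periodic orbit encircling $P_1$ would force a strict inequality between consecutive crossing points of the subdividing curves that is incompatible with periodicity, treating the cases $p>q$, $p<q$, $p=q$ separately as in Figures~\ref{figuranoperiodichepmaggioreq}--\ref{figuranoperiodichepminoreq}.

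The step I expect to be the main obstacle --- in fact essentially the only point requiring attention, the rest being formal --- is keeping track of the $\pi$-translation. In the symmetry approach one must make sure that it is the shift $\alpha \mapsto \alpha - \pi$ (an odd multiple of $\pi$, hence sign reversing) that takes $R_2$ literally onto $R_1$, rather than onto a $2\pi$-translate of it. In the direct approach one must check that the partition of $R_2$ and the signs of $\dot\vartheta,\dot\alpha$ in its subregions are exactly those obtained from the $R_1$ picture by $\alpha \mapsto \alpha + \pi$ composed with a reversal of orientation, so that the no-periodic-orbit argument transfers verbatim. Once this bookkeeping is settled, the conclusion follows with no further computation.
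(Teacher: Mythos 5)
Your proposal is correct. Your second route is exactly the paper's proof: the paper disposes of Lemma \ref{nienteorbiteperiodicheinRdue} with the single remark that the details are precisely as in Lemma \ref{nienteorbiteperiodicheinRuno}, i.e.\ the backward-time Poincar\'e--Bendixson argument inside $R_2$ (invariance for $s\leq s_0$ from Lemma \ref{permanenzatraiettorieinRdue}, the classification of stationary points from Lemma \ref{lemmapuntistazionari}, and the exclusion of periodic orbits by the nullcline partition, now given by $\alpha=\vartheta+\pi$ and $\alpha=g(\vartheta)+\pi$). Your first route is genuinely different and, in fact, tighter: since both right-hand sides of \eqref{campodivettoripianoalfatheta} reverse sign under $\alpha\mapsto\alpha-\pi$, the substitution $(\vartheta(s),\alpha(s))\mapsto(\vartheta(-s),\alpha(-s)-\pi)$ sends trajectories to trajectories, maps the strip $\vartheta+\pi/2\leq\alpha\leq\vartheta+3\pi/2$ exactly onto $\vartheta-\pi/2\leq\alpha\leq\vartheta+\pi/2$ (so $R_2$ onto $R_1$), and sends $P_1$ to $P_0$, consistently with the Jacobian relation $J(P_1)=-J(P_0)$ noted in Lemma \ref{lemmapuntistazionari}; Lemma \ref{nienteorbiteperiodicheinRdue} then follows from Lemma \ref{nienteorbiteperiodicheinRuno} by undoing the substitution, with no case analysis on $p,q$ repeated. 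What this buys is that it converts the paper's ``analogous'' into an actual one-line deduction and simultaneously explains the source/sink duality of $P_1$ and $P_0$; the only point needing care, which you correctly single out, is that the shift must be by an odd multiple of $\pi$ so that the sign reversal and the identification $R_2\to R_1$ happen simultaneously, and that check is immediate.
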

We are now in the right position to transfer all this material to the orbit space $Q$: our results are summarized in the following

\begin{theorem}\label{teoremabiconservative} There exists an infinite family of proper $SO(p+1)
\times SO(q+1)$-invariant biconservative immersions (cones) in $\R^n$ ($n=p+q+2$), of type \eqref{equivariantimmersion}. Their corresponding profile curves $\gamma(s)$ are defined on intervals of the type either (i) $[s_0,\,+\infty)$ or (ii) $(-\infty,\,s_0]$, with $\gamma(s_0)=(0,0)$ in both cases and $\gamma(s)$ in the interior of $Q$ if $s \neq s_0$. Moreover, in the case (i), as $s$ increases to $+\infty$, the curve $\gamma(s)$ tends asymptotically to the profile of the minimal cone, i.e., $(q \, \cos \alpha_0 \, x -p\,\sin \alpha_0 \, y)=0$ . Similarly, in the case (ii), as $s$ decreases to $-\infty$, the curve $\gamma(s)$ tends asymptotically to the profile of the minimal cone. In both cases: if $p+q \leq 17$, at infinity the profile curves $\gamma$ intersect the profile of the minimal cone at infinitely many points, while, if $p+q \geq 18$, at infinity the profile curves $\gamma$ do not intersect the profile of the minimal cone.
None of these hypersurfaces is complete.
\end{theorem}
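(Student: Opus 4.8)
The plan is to convert the qualitative information about the trajectories of $X(\vartheta,\alpha)$ collected in Lemmata \ref{lemmapuntistazionari}--\ref{nienteorbiteperiodicheinRdue} into statements about the profile curves $\gamma$ in the orbit space $Q$. First I would fix a trajectory $(\vartheta(s),\alpha(s))$ of the system \eqref{campodivettoripianoalfatheta} that meets, at some $s_1$, the interior of $R_1$ and is not the stationary point $P_0$; by Lemma \ref{permanenzatraiettorieinRuno} it remains in $R_1$ for $s\ge s_1$, and by Lemma \ref{nienteorbiteperiodicheinRuno} $\lim_{s\to+\infty}(\vartheta(s),\alpha(s))=P_0$. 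To pass back to a solution $\gamma=(x,y)$ of \eqref{equazione-biconservative} I would use the polar substitution \eqref{angolotheta} together with the two identities $\dot r=\cos(\alpha-\vartheta)$ and $r\,\dot\vartheta=\sin(\alpha-\vartheta)$, both immediate from \eqref{angoloalfa} and \eqref{differentiation}; these reconstruct $r$, hence $x,y$ and the arclength parameter, along the trajectory, the only remaining freedom being the homothety constant of the Remark after \eqref{campodivettoricomeunoforma}. Near $P_0$ one has $\alpha-\vartheta\to 0$, so $\dot r\to 1$: on the end $s\to+\infty$ the curve $\gamma$ escapes to infinity, and $\vartheta(s)\to\alpha_0$ gives $y/x\to\tan\alpha_0$, i.e.\ $\gamma$ becomes asymptotic to the line $q\cos\alpha_0\,x-p\sin\alpha_0\,y=0$, the profile of the minimal cone of Remark \ref{re:specialsolutionofsystembiconservative}. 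Continuing the parameter the other way, the trajectory leaves the interior of $R_1$; by Remark \ref{soluzioninonvannosugliassi} the only boundary point of $Q$ that $\gamma$ can hit is the origin, and one uses the monotone quantities $I,J$ of Lemma \ref{monotoniadiIeJ} together with a direct estimate of $\int\cos(\alpha-\vartheta)\,ds$ to show that $\gamma$ actually reaches $(0,0)$ at a finite value $s_0$ and stays in the interior of $Q$ for $s\neq s_0$. This produces the curves of type (i); type (ii) comes in the mirror way from $R_2$, Lemma \ref{permanenzatraiettorieinRdue} and Lemma \ref{nienteorbiteperiodicheinRdue}, with $s$ replaced by $-s$ and $P_1$ in the role of $P_0$. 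Varying the trajectory — equivalently, the interior point of $R_1$ modulo the flow, or the homothety constant — yields an infinite family, and every member is genuinely proper biconservative since $\dot\alpha\not\equiv 0$ (Remark \ref{re:specialsolutionofsystembiconservative}).

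For the dichotomy, note that $\gamma(s)$ meets the profile of the minimal cone precisely when $\vartheta(s)=\alpha_0$, and the behaviour near $P_0$ is governed by the Jacobian \eqref{jacobianainPzero}: by Lemma \ref{lemmapuntistazionari}, if $p+q\le 17$ the eigenvalues are complex conjugate, so $(\vartheta(s),\alpha(s))$ spirals into $P_0$ and $\vartheta(s)-\alpha_0$ changes sign infinitely often as $s\to+\infty$ (resp.\ $s\to-\infty$ in case (ii)), giving infinitely many intersections ``at infinity''; if $p+q\ge 18$ the eigenvalues are real and negative, $P_0$ is a nodal sink, the trajectory enters $P_0$ inside a sector on which $\vartheta-\alpha_0$ keeps a constant sign, so there is no intersection at infinity. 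Finally, since $\gamma(s_0)=(0,0)$ is the orbit-space image of the single point orbit at the origin of $\R^n$, the immersion $\varphi$ acquires a topological (cone-like) singularity there, hence the hypersurface is not complete.

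The step I expect to be the real obstacle is precisely the assertion that, once the trajectory is continued out of $R_1$, the associated curve $\gamma$ terminates exactly at the origin in finite arclength, rather than limiting to a point of one of the coordinate axes or running back to infinity. This requires a quantitative control of $r=r(s)$ — that is, of $\int\cos(\alpha-\vartheta)\,ds$ — along the trajectory near the boundary configuration of $R$, and is exactly the ``some care is needed'' point flagged in the Remark following \eqref{campodivettoripianoalfatheta}; the monotonicity of $I$ and $J$ from Lemma \ref{monotoniadiIeJ} is the tool I would try to leverage here.
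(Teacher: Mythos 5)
Your strategy coincides with the paper's (phase--plane analysis of \eqref{campodivettoripianoalfatheta}, Lemmata \ref{lemmapuntistazionari}--\ref{nienteorbiteperiodicheinRdue} for the forward asymptotics and the $p+q\leq 17$ versus $p+q\geq 18$ dichotomy, reconstruction of $\gamma$ in $Q$ via \eqref{angolotheta} and the homothety invariance), and those parts of your write-up match the paper's proof. The genuine gap is exactly the step you yourself flag at the end: the assertion that, continuing the solution backward, the curve $\gamma$ terminates at the origin at a \emph{finite} value $s_0$ — which is the whole content of cases (i)/(ii), and what makes these hypersurfaces cones and non-complete — is stated (``one uses $I,J$ together with a direct estimate of $\int\cos(\alpha-\vartheta)\,ds$'') but never argued. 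This is not a routine verification; it is the point to which the paper devotes the alternative (A)/(B) in its proof of Theorem \ref{teoremabiconservative}: forward from $s^*$ it uses $I(s^*)>0$ (resp.\ $J(s^*)>0$) and Lemma \ref{monotoniadiIeJ} to exclude that $\gamma$ reaches the origin for $s\geq s^*$, while backward it assumes the solution is defined for all $s<s^*$, deduces from Lemma \ref{nienteorbiteperiodicheinRdue} that the trajectory tends to $P_1$, so that $I(\bar s)<0$ and $J(\bar s)<0$ at some $\bar s$, and derives a contradiction from the requirement that both monotone quantities become positive. None of this (nor any substitute for it) appears in your sketch, and your parallel claim that the backward trajectory ``leaves the interior of $R_1$'' is likewise left unjustified (there are exceptional trajectories, e.g.\ on the unstable manifolds of the saddles $Q_1$, $Q_3$, that never do).

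Moreover, the particular tool you propose, a direct estimate of $\int\cos(\alpha-\vartheta)\,ds$, i.e.\ of $\dot r$, does not by itself close the gap. By Lemma \ref{permanenzatraiettorieinRdue}, once the backward continuation has entered $R_2$ it remains in $R_2$ for all earlier parameters, and in $R_2$ one has $\alpha-\vartheta\in[\pi/2,\,3\pi/2]$, hence $\dot r=\cos(\alpha-\vartheta)\leq 0$: as $s$ decreases there, $r$ does not decrease any further. So the conclusion $r\to 0$ would have to be forced \emph{before} the backward trajectory leaves $R_1$, and no estimate of $\int\cos(\alpha-\vartheta)\,ds$ alone produces that; one needs the global input the paper uses — the sign change of the increasing quantities $I=y^{q/3}\cos\alpha$ and $J=x^{p/3}\sin\alpha$ between a neighbourhood of $P_1$ (where both are negative) and a neighbourhood of $P_0$ (where both are positive) — or an equivalent argument ruling out alternative (A). Everything else in your proposal (asymptoticity to the line $q\cos\alpha_0\,x-p\sin\alpha_0\,y=0$ from $\vartheta\to\alpha_0$, intersections at infinity governed by spiral versus nodal behaviour at $P_0$, non-completeness from the cone point at the origin) is at the same level of detail as the paper; but without the missing step the central structural claim of the theorem is unproved.
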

\begin{proof} Let $\gamma(s)=(x(s),y(s))$ be a local solution of \eqref{equazione-biconservative} in the interior of $Q$. Let us first assume that, at some point $s^*$, the corresponding trajectory in the $(\vartheta,\,\alpha)$-plane is in $R_1$. Then we can assume that
\begin{equation*}
    - \frac{\pi}{2}< \alpha(s^*) < \pi \,\, .
\end{equation*}
We know, from Remark \ref{soluzioninonvannosugliassi}, that $\gamma(s)$ will be defined for all $s \geq s^*$ unless it reaches the origin $(0,0)$. We argue by contradiction: suppose that
\begin{equation*}
    - \frac{\pi}{2}< \alpha(s^*) < \frac{\pi}{2}\,\, :
\end{equation*}
then $I(s^*)>0$. Therefore the hypothesis that $\gamma$ reaches the origin, for some $s > s^*$, would contradict the fact that, according to Lemma \ref{monotoniadiIeJ}, $I(s)$ is increasing along solutions. Similarly, if
\begin{equation*}
    0< \alpha(s^*) < \pi\,\, ,
\end{equation*}
one uses the monotonicity of $J(s)$ to conclude that $\gamma$ cannot reach the origin. By way of summary, we conclude that $\gamma(s)$ is defined at least for all $s \geq s^*$. Next, the qualitative asymptotic behaviour of $\gamma(s)$ as $s$ tends to $+ \infty$ is an immediate consequence of the fact (see Lemma \ref{lemmapuntistazionari}) that $P_0$ is a spiral sink if $(p+q) \leq 17$, and a nodal sink if $(p+q) \geq 18$. At this stage, we have to investigate the qualitative behaviour of our solution for $s<s^*$: to summarize, we have only two possibilities:
$$
\,\,
$$
(A) The solution is defined for all $s<s^*$ ;

(B) The solution reaches the origin at some $s_0<s^*$ .
$$
\,\,
$$
In order to complete our analysis, it is enough to show that (A) is not possible, so that (B) holds. So, arguing again by contradiction, let us assume that (A) holds. Then necessarily (use Figure \ref{figuracampovettori} and Lemma \ref{nienteorbiteperiodicheinRdue}) the corresponding trajectory in the $(\vartheta,\alpha)$-plane must leave $R_1$ and tend to $P_1$ as $s$ decreases to $-\infty$. In particular, say near $-\infty$, there exists a point $\bar{s}$ at which $\pi < \alpha(\bar{s})< (3 \,\pi \slash 2)$, so that $I(\bar{s})<0$ and $J(\bar{s})<0$. Because $I$ and $J$ must become both positive moving along the solution in the sense of increasing values of $s$ (because, in the $(\vartheta,\,\alpha)$-plane, the trajectory tends to $P_0$) , we conclude that the solution must pass through the origin, a fact which makes (A) not possible and so confirms (B).

In a dual way, one completes the proof by studying the qualitative behaviour, as $s$ decreases to $-\infty$, of solutions with a point in $R_2$: since the arguments are the same as above, we omit the details.

\end{proof}
\begin{remark}\label{geometriadellesoluzioni} We have chosen the above formulation for Theorem \ref{teoremabiconservative} because we wanted to give a fairly complete portrait of solutions in $Q$ and of their counterparts in the $(\vartheta,\,\alpha)$-plane. However, we point out that, up to reparametrization, each of the profiles of the biconservative cones of Theorem \ref{teoremabiconservative} could be described by means of a curve $\gamma(s)$, $s\geq0$, with $\gamma(0)=(0,0)$ and $\gamma(s)$ in the interior of $Q$ for $s>0$ . By way of example, there is no geometric difference between the profiles $\gamma(s)=(\cos \alpha_0 \,s ,\,\sin \alpha_0 \,s)$ with $s \geq 0$ (i.e., the minimal cone $\alpha(s)\equiv \alpha_0$), and $\gamma(s)=(\cos (\alpha_0+\pi)\,s ,\,\sin (\alpha_0+\pi) \,s)=(-\,\cos \alpha_0\,s ,\,-\,\sin \alpha_0 \,s)$ with $s \leq 0$ (i.e., the same minimal cone represented as $\alpha(s)\equiv (\alpha_0+\pi)$).
\end{remark}

\section{$SO(p+1) \times SO(q+1)$-invariant biharmonic immersions}\label{biharmonic-immersion-non-existence}

In this section we prove that biharmonic $SO(p+1) \times SO(q+1)$-invariant immersions into the Euclidean space $\R^n$, $n=p+q+2$, are minimal. More precisely, we obtain the following:

\begin{theorem}\label{teo-non-existence-biharmonic}
Let  $\varphi : M= \s^p \times \s^q\times (a,b)\to\R^{p+1}\times\R^{q+1}$ be a $SO(p+1) \times SO(q+1)$-invariant biharmonic immersion, that is a map of type \eqref{equivariantimmersion} where $x(s)$ and $y(s)$
are solutions of the system of ODE:
\begin{equation}\label{eq-syst-main}
\begin{cases}
\dot{f}\left(3(\ddot{y}\,\dot{x}-\ddot{x}\,\dot{y})+p\, \dfrac{\dot{y}}{x}- q \,\dfrac{\dot{x}}{y}\right)=0\\
\\
 \ddot{f}+\,p\,\dfrac{\dot{x}}{x}\,\dot{f}\,+\, q\,\dfrac{\dot{y}}{y}\,\dot{f}-f\left(p\left(\dfrac{\dot{y}}{x}\right)^2+q\left(\dfrac{\dot{x}}{y}\right)^2+(\ddot{y}\,\dot{x}-\ddot{x}\,\dot{y})^2 \right)=0\,,
\end{cases}
\end{equation}
with
$$
f=(\ddot{y}\dot{x}-\ddot{x}\dot{y})+p\, \dfrac{\dot{y}}{x}- q \,\dfrac{\dot{x}}{y}\,.
$$
Then $\varphi$ is a minimal immersion.

\end{theorem}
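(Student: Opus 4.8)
The plan is to exploit the biconservative equation coming from the first line of \eqref{eq-syst-main} and then feed it into the second line to get a contradiction with non-minimality. First I would observe that the first equation of \eqref{eq-syst-main} is precisely \eqref{bitensionecomponentetangente}, i.e. $\dot f(f+2\dot\alpha)=0$, so on any interval the immersion is either CMC ($\dot f\equiv0$) or satisfies $f=-2\dot\alpha$, which (via \eqref{definizionedif}) is exactly equation \eqref{equazione-biconservative}. In the CMC case, $f$ constant; if $f\ne0$ this is a non-minimal CMC $SO(p+1)\times SO(q+1)$-invariant hypersurface, and plugging $\dot f=0$ into \eqref{bitensionecomponentenormale} forces $f\bigl(p(\dot y/x)^2+q(\dot x/y)^2+\dot\alpha^2\bigr)=0$; since the bracket is strictly positive (it equals $|A|^2$ and the shape operator has a nonzero eigenvalue whenever $f\ne0$, indeed the bracket can vanish only if $\dot y=\dot x=\dot\alpha=0$ simultaneously, impossible since $\dot x^2+\dot y^2=1$), we get $f=0$, a contradiction. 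So the CMC case yields minimality.

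The substantive case is the proper biconservative one: $f=-2\dot\alpha$, $f$ not identically constant, hence $\gamma$ solves \eqref{equazione-biconservative} with $\dot\alpha\not\equiv0$. Here I would substitute $f=-2\dot\alpha$ into the normal equation \eqref{bitensionecomponentenormale}. Differentiating $f=-2\dot\alpha$ gives $\dot f=-2\ddot\alpha$ and $\ddot f=-2\dddot\alpha$, so \eqref{bitensionecomponentenormale} becomes an ODE purely in $\alpha$ and its derivatives along the curve:
\begin{equation*}
-2\dddot\alpha-2\ddot\alpha\Bigl(p\frac{\dot x}{x}+q\frac{\dot y}{y}\Bigr)+2\dot\alpha\Bigl(p\frac{\dot y^2}{x^2}+q\frac{\dot x^2}{y^2}+\dot\alpha^2\Bigr)=0.
\end{equation*}
The idea is to reduce this, using \eqref{equazione-biconservative} and its derivatives to eliminate $\ddot\alpha,\dddot\alpha$, to an algebraic relation among $\alpha$, $x$, $y$ that is incompatible with \eqref{equazione-biconservative} unless $\dot\alpha\equiv0$. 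A cleaner route is to pass to the $(\vartheta,\alpha)$-plane: the extra normal condition becomes a second relation cutting out a one-dimensional subset of the trajectories of the vector field $X(\vartheta,\alpha)$ from \eqref{campodivettoripianoalfatheta-X}, and I would check that the only trajectory of $X$ that also satisfies this second relation is the stationary point $P_0$ — equivalently, that along any non-constant trajectory the quantity obtained from the normal equation has non-vanishing derivative, so cannot stay zero. An alternative, and perhaps the most robust, is to use the monotone quantities $I=y^{q/3}\cos\alpha$ and $J=x^{p/3}\sin\alpha$ of Lemma \ref{monotoniadiIeJ}: a proper biconservative solution has strictly increasing $I$ and $J$ (strict because $\dot\alpha\not\equiv0$ forces $\sin\alpha\not\equiv0$ and $\cos\alpha\not\equiv0$ on subintervals), so $\dot\alpha$ changes in a controlled way, and combining this with the normal equation — which would otherwise pin $\dot\alpha$ to an algebraic function of position — gives a contradiction.

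The hard part will be organizing the elimination between \eqref{equazione-biconservative} (and its derivatives) and the normal equation \eqref{bitensionecomponentenormale} with $f=-2\dot\alpha$ without drowning in computation: one must choose good variables (I expect $(\vartheta,\alpha)$, together with treating $r$ via the homothety invariance, to be the right ones) so that the biharmonicity system collapses to a manifestly over-determined pair of conditions on the trajectories of $X$, whose only common solution is $P_0$, i.e. the minimal cone; then \eqref{puntocriticochiave} identifies $\varphi$ as minimal. A secondary subtlety is the logical bookkeeping: the first equation of \eqref{eq-syst-main} is a product being zero, so a priori the interval $(a,b)$ could split into CMC pieces and proper-biconservative pieces; I would argue that on the proper-biconservative part minimality is forced, hence $f\equiv0$ there, and then by analyticity (the ODE system has analytic coefficients away from $x=0$, $y=0$) $f\equiv0$ on all of $(a,b)$, so $\varphi$ is globally minimal.
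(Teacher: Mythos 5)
Your reduction is set up correctly: the first equation of \eqref{eq-syst-main} splits the analysis into a CMC piece (where \eqref{bitensionecomponentenormale} together with $|A|^2>0$ forces $f\equiv 0$, as you argue) and a proper biconservative piece with $f=-2\dot{\alpha}$, and the whole content of the theorem is to rule out the latter. But at exactly that point your proposal stops being a proof: you state three candidate strategies (eliminate $\ddot{\alpha},\dddot{\alpha}$ to reach an ``incompatible'' algebraic relation; pass to the $(\vartheta,\alpha)$-plane and check that only $P_0$ survives the extra normal constraint; or invoke the monotone quantities $I,J$ of Lemma \ref{monotoniadiIeJ}), and none of them is carried out. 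The phase-plane version is not something that follows from the qualitative analysis of Section 4 (which only gives asymptotics of trajectories of $X$); one would still have to verify, by explicit computation, that the locus cut out by the normal equation is not invariant under the flow except at $P_0$. The $I,J$ route as described does not work: it is the tangential equation \eqref{equazione-biconservative}, not the normal one, that expresses $\dot{\alpha}$ algebraically in terms of $(x,y,\alpha)$, and the monotonicity of $I$ and $J$ holds for every proper biconservative solution, so by itself it cannot contradict biharmonicity. In short, the crux --- showing that the two scalar conditions are genuinely overdetermined along a non-constant solution --- is asserted, not proved.

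For comparison, the paper's proof is precisely that missing computation, organized algebraically rather than in the $(\vartheta,\alpha)$-plane. From the tangential equation one gets $\ddot{x},\ddot{y}$ and $f=\tfrac{2}{3}\bigl(p\,\dot{y}/x-q\,\dot{x}/y\bigr)$ explicitly; substituting into the normal equation and writing the profile locally as $y=y(x)$ yields a cubic polynomial equation in $dy/dx$ with coefficients homogeneous of degree $3$ in $(x,y)$; differentiating it along the curve (using the second-order ODE for $y(x)$ coming from biconservativity) yields a quintic with degree-$4$ homogeneous coefficients. Since both polynomials vanish at $dy/dx$ for every $x$ in an interval, their resultant --- a homogeneous polynomial of degree $27$ in $(x,y)$ --- vanishes along the curve, forcing $y=mx$; by Remark \ref{re:specialsolutionofsystembiconservative} the only such biconservative profile is the minimal cone $y=\sqrt{q/p}\,x$, contradicting $\dot f\neq 0$. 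If you want to salvage your plan, you must supply an argument of this strength (your ``second relation in the $(\vartheta,\alpha)$-plane'' idea could in principle be made to work, since the normal equation is also homothety-invariant, but proving non-invariance of that locus under $X$ amounts to the same elimination); as written, the proposal has a genuine gap at its central step.
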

\begin{proof}

It is sufficient to show that $\varphi$ is a CMC immersion. In fact,  biharmonic CMC immersions in
$\R^n$ are minimal.
Assume that $\varphi$ is not CMC, then there exists an open interval $I$ of $(a,b)$ where $\dot{f}(s)> 0$, for all $s\in I$.

From the first equation in \eqref{eq-syst-main}, multiplied by $\dot{x}$, it is easy to deduce that
\begin{equation}\label{eq-explicity2}
\ddot{y}= -\frac{\dot{x}}{3}\left(p\, \dfrac{\dot{y}}{x}- q \,\dfrac{\dot{x}}{y}\right)\,.
\end{equation}
In the same way, multiplying by $\dot{y}$, we have
\begin{equation}\label{eq-explicitx2}
\ddot{x}= \frac{\dot{y}}{3}\left(p\, \dfrac{\dot{y}}{x}- q \,\dfrac{\dot{x}}{y}\right)\,.
\end{equation}
Using \eqref{eq-explicity2} and \eqref{eq-explicitx2} the expression of $f$ becomes
\begin{equation}\label{eq-explicitf}
f=\frac{2}{3}\left( p\, \dfrac{\dot{y}}{x}- q \,\dfrac{\dot{x}}{y}\right)\,.
\end{equation}
Now, using \eqref{eq-explicity2}, \eqref{eq-explicitx2} and \eqref{eq-explicitf}, we find that the second equation of \eqref{eq-syst-main} takes the form
\begin{equation}\label{eq-normal-explicit}
A(x,y) \,\dot{x}^2\, \dot{y}
+B(x,y)\, \dot{x}\, \dot{y}^2
+C(x,y)\, \dot{x}
+D(x,y)\, \dot{y}=0\,,
\end{equation}
where
$$
\begin{cases}
A(x,y)=3\,p\,(3+2p)\,y^3-6 \,p\, q \, x^2\,y\\
B(x,y)=-3\,q\,(3+2q)\,x^3+6\, p\, q\, x\, y^2\\
C(x,y)=q^2\,(6+q)\,x^3+p\, q (p-3)\, x\, y^2\\
D(x,y)= -p^2\, (p+6) \,y^3-p\, \,q (q-3)\,x^2\,y\,.
\end{cases}
$$

For a fixed $s_0\in I$ we put $x_0=x(s_0)$. Since $\dot{x}^2+\dot{y}^2=1$, we can express $y$ as a function of $x$, $y=y(x)$, with  $x\in(x_0-\varepsilon,x_0+\varepsilon)$, and write
\begin{equation}\label{eq-non-ex-1}
\dot{y}=\frac{dy}{dx}\, \dot{x}\,.
\end{equation}
From $\dot{x}^2+\dot{y}^2=1$ we obtain
\begin{equation}\label{eq-non-ex-2}
\dot{x}^2=\dfrac{1}{1+\left(\dfrac{dy}{dx}\right)^2}\,.
\end{equation}
Deriving \eqref{eq-non-ex-1} with respect to $s$ an easy computation leads us to
\begin{equation}\label{eq-non-ex-3}
\ddot{y}=\dfrac{1}{\left(1+\left(\dfrac{dy}{dx}\right)^2\right)^2}\, \dfrac{d^2y}{dx^2}\,,
\end{equation}
that, together with \eqref{eq-explicity2}, gives
\begin{equation}\label{eq-non-ex-4}
\frac{d^2y}{dx^2}=\frac{1}{3} \left(
1+\left(\dfrac{dy}{dx}\right)^2\right)\left(\frac{q}{y}-\frac{p}{x}\dfrac{dy}{dx} \right) \,.
\end{equation}
Substituting \eqref{eq-non-ex-1} and \eqref{eq-non-ex-2} in \eqref{eq-normal-explicit} we obtain, up to a multiplicative factor $\dot{x}/\left(1+\left({dy}/{dx}\right)^2\right)$,
$$
D(x,y) \left(\dfrac{dy}{dx}\right)^3+(B(x,y)+C(x,y))\left(\dfrac{dy}{dx}\right)^2+(A(x,y)+D(x,y)) \left(\dfrac{dy}{dx}\right)+C(x,y)=0\,,
$$
which we rewrite as
\begin{equation}\label{eq-non-ex-5}
A_3(x,y) \left(\dfrac{dy}{dx}\right)^3+A_2(x,y)\left(\dfrac{dy}{dx}\right)^2+A_1(x,y) \left(\dfrac{dy}{dx}\right)+A_0(x,y)=0\,.
\end{equation}
Next, taking the derivative of \eqref{eq-non-ex-5} with respect to $x$ and bearing in mind \eqref{eq-non-ex-4}, we obtain
\begin{equation}\label{eq-non-ex-6}{
B_5 \left(\dfrac{dy}{dx}\right)^5+B_4 \left(\dfrac{dy}{dx}\right)^4+B_3 \left(\dfrac{dy}{dx}\right)^3+B_2\left(\dfrac{dy}{dx}\right)^2+B_1\left(\dfrac{dy}{dx}\right)+B_0=0
}
\end{equation}
where
$$
\begin{cases}
B_5(x,y)=3\,p^2\,q\,(q-3)\,x^2\,y^2+3\,p^3\,(p+6)\, y^4\\
B_4(x,y)=-p\,q\,\,(5\,q^2-6\,q-27)\,x^3\,y-p^2\,(5\,p\,q+9\,p+24\,q+54)\,x\,y^3\\
B_3(x,y)=2\,q^2\, (q^2-9)\, x^4 + 6\, p\, q\, (p\, q+6)\, x^2\, y^2 + p^2\, ( 4\, p^2+ 18\, p-9)\, y^4\\
B_2(x,y)=3\,q\, [p\,(-2\,q^2+q+3)+3\,(q^2-9)]\,x^3\,y+3\,p\,[-p^2\,(2\,q+3)-7\,p\,q+6\,q+27]\,x\,y^3\\
B_1(x,y)=2\,q^2\,(q^2-9)\, x^4 + 3\, p\, q\, [p\, (q+3)-12]\, x^2 y^2 + p^2\, (p^2-9)\, y^4\\
B_0(x,y)=-q^2\, [p\,(q+3)-9\,(q+6)]\,x^3\,y-p^2\,q\,(p-3)\,x\,y^3\,.
\end{cases}
$$

For any arbitrarily fixed $x_1\in(x_0-\varepsilon,x_0+\varepsilon)$, setting $y_1=y(x_1)$, \eqref{eq-non-ex-5} and \eqref{eq-non-ex-6}
can be thought as two polynomial equations in $dy/dx$, with coefficients given, respectively,  by $A_i(x_1,y_1),\, i=0,\ldots,3$ and $B_i(x_1,y_1),\, i=0,\ldots,5$,  which have the common solution $(dy/dx)(x_1)$.
Using standard arguments of algebraic geometry (\cite{HasVla95}), this implies that the resultant of the two polynomials is zero for any $x_1\in(x_0-\varepsilon,x_0+\varepsilon)$.  Now, since the coefficients $A_i(x,y)$ and $B_i(x,y)$ are homogeneous
polynomials of degree $3$ and $4$ respectively, it turns out that the resultant is a homogeneous polynomial of degree $27=3\cdot 5+ 4\cdot 3$. Then
the only real factors are of type $y-mx$ or of type $a^2\,x^2+b^2\,y^2$, and this implies that a common solution of \eqref{eq-non-ex-5} and \eqref{eq-non-ex-6} must be of the
form $y=mx$. Using Remark~\ref{re:specialsolutionofsystembiconservative} we know that the only solution of \eqref{eq-non-ex-5} of type $y=mx$
is $y=\sqrt{q/p}\;x$, which corresponds to the invariant minimal cone, a contradiction.

\end{proof}

\section{Proper $SO(p+1)$-invariant biconservative hypersurfaces}

In this section we investigate the existence of biconservative  $SO(p+1)$-invariant immersions into the Euclidean space $\R^n$, $n=p+2$. More precisely, we shall study isometric immersions of the following type:
\begin{equation}\label{equivariantimmersion-1}
\left .
  \begin{array}{cccccccc}
    \varphi &\colon& M=& \s^p&\times&(a,b)&\to &\R^{q+1}\\
    &&&&&&& \\
   &&&(w&,&s\,)&\longmapsto & \quad (x(s)\,w,\,y(s)) \,\, , \\
  \end{array}
\right .
\end{equation}
where $(a,b)$ is a real interval which will be precised during the analysis, $x(s)$ is a smooth positive function, while $y(s)$ is a smooth function with isolated zeros.
We shall also assume that
\begin{equation}\label{sascissacurvilinea-1}
    \dot{x}^2+\dot{y}^2 = 1 \,\, ,
\end{equation}
so that the induced metric on the domain in \eqref{equivariantimmersion-1} is given by:
\begin{equation}\label{inducedmetric-1}
    g=x^2(s)\,g_{\s^p}+ds^2 \,\,,
\end{equation}
The unit normal to $\varphi(M)$ can be written, in this case, as
\begin{equation}\label{unitnormal-1}
    \eta = (-\,\dot{y}\,w,\,\dot{x}) \,\, .
\end{equation}
Immersions of the type \eqref{equivariantimmersion-1} are $G=SO(p+1)$-invariant and the orbit space of the target coincides with the half plane
$$
    Q=\R^{n}/G=\left \{(x,\,y) \in \, \R^2 \,\, : \,\, x\, \geq 0\,
\right \} \,\, .
$$
We note that regular (i.e., corresponding to a point $(x,y)$ with $x >0$) orbits are of the type $\s^p$. The orbit associated to a point of the $y$-axis is a single point. Also in this case, since \eqref{sascissacurvilinea-1} holds, we express quantities with respect to the angle $\alpha$ that the profile curve $\gamma(s)=(x(s),\,y(s))$ forms with the $x$-axis. Thus, we have:
\begin{equation}\label{angoloalfa-1}
    \left \{ \begin{array}{l}
              \dot{x}=\cos \alpha \\
              \dot{y}=\sin \alpha
             \\
             \dot{\alpha}=\ddot{y}\,\dot{x}-\,\ddot{x}\,\dot{y}\,.
            \end{array}
\right .
\end{equation}

In this context we have the following analog of Proposition~\ref{condizionedibiarmonicita}:

 \begin{proposition}\label{condizionedibiarmonicita-1}
 Let $\varphi$ be an immersion as in \eqref{equivariantimmersion-1} and let
 \begin{equation}\label{definizionedif-1}
    f=\dot{\alpha}+p\, \frac{\dot{y}}{x} \,.
 \end{equation}

 Then the tangential and normal parts of the bitension field $\tau_2(\varphi)$ vanish if, respectively,
 \begin{equation}\label{bitensionecomponentetangente-1}
    \dot{f}\, (f+2\,\dot{\alpha}) =0 \, ,
 \end{equation}
and
 \begin{equation}\label{bitensionecomponentenormale-1}
   \ddot{f}+p\, \dot{f}\,\frac{\dot{x}}{x} -f\left(p\,\left(\frac{\dot{y}}{x}\right)^2+\dot{\alpha}^2\right)=0\,.
 \end{equation}
 \end{proposition}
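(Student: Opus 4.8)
The plan is to mimic, \emph{mutatis mutandis}, the proof of Proposition~\ref{condizionedibiarmonicita}, working this time with the simpler equivariant immersion \eqref{equivariantimmersion-1}: the target has only one warped factor $\s^p$ and a flat direction $y$, so all the $\s^q$-contributions in the previous computation simply drop out. First I would introduce the local orthonormal frame
$$
\left\{\frac{X_i}{x},\partial_s\right\}_{i=1,\ldots,p}
$$
with $\{X_i\}$ a local orthonormal frame on $\s^p$, and compute $\tau(\varphi)$ by the same three ingredients used before: (a) $\nabla^{\varphi}_{X_i/x}\,d\varphi(X_i/x)=\nabla^{\R^{p+1}}_{X_i}X_i$ from the Weingarten equation of $\s^p\subset\R^{p+1}$; (b) $\nabla^{\varphi}_{\partial_s}d\varphi(\partial_s)=(\ddot x\,w,\ddot y)=\dot\alpha\,\eta$, using that the $y$-direction in $\R^n$ is flat and $\nabla^{\R^n}Z_1$ vanishes along the relevant directions (here there is no $Z_2$ term); (c) the Christoffel symbols of $g=x^2 g_{\s^p}+ds^2$, namely $\Gamma^s_{ij}=-x\dot x\,(g_{\s^p})_{ij}$, giving $\nabla^M_{X_i/x}(X_i/x)=\frac1{x^2}\nabla^{\s^p}_{X_i}X_i-\frac{\dot x}{x}\partial_s$. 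Assembling these, and using $d\varphi(\nabla^{\s^p}_{X_i}X_i)=x^2\nabla^{\s^p(x)}_{X_i}X_i$, one gets $\tau(\varphi)=-\tfrac{p}{x}w+p\tfrac{\dot x}{x}d\varphi(\partial_s)+\dot\alpha\,\eta$; since $\varphi$ is isometric, $\tau(\varphi)=f\eta$ with $f=\dot\alpha+p\,\dot y/x$, which is exactly \eqref{definizionedif-1}.

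Next I would compute the shape operator. By the same argument as in the $p\times q$ case, $\nabla^{\varphi}_X\eta=d\varphi(-\tfrac{\dot y}{x}X)$ for $X\in C(T\s^p)$ and $\nabla^{\varphi}_{\partial_s}\eta=d\varphi(-\dot\alpha\,\partial_s)$, so with respect to the above frame the shape operator is diagonal with entries $\dot y/x$ ($p$ times) and $\dot\alpha$; in particular $f=\trace A$ is consistent, and
$$
|A|^2=p\left(\frac{\dot y}{x}\right)^2+\dot\alpha^2 .
$$
Since $N=\R^n$ is flat, $\ricci^N\equiv0$, so Theorem~\ref{th: bih subm N} applies with the Ricci terms absent. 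The tangential equation \eqref{eq: caract_bih_tangent} becomes $2A(\grad f)+f\grad f=0$; because $\grad f=\dot f\,\partial_s$ and $A(\partial_s)=\dot\alpha\,\partial_s$, this reads $\dot f(2\dot\alpha+f)\,\partial_s=0$, i.e.\ \eqref{bitensionecomponentetangente-1}. For the normal equation \eqref{eq: caract_bih_normal} I compute $\Delta f$ using the frame and the Christoffel symbols: the only surviving contributions are $-\Delta f=\ddot f+p\tfrac{\dot x}{x}\dot f$, so $\Delta f=-\ddot f-p\tfrac{\dot x}{x}\dot f$, and substituting into $\Delta f+f|A|^2=0$ gives precisely \eqref{bitensionecomponentenormale-1}.

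I do not expect any real obstacle here: this is a strictly simpler, purely computational analog of the already-proven Proposition~\ref{condizionedibiarmonicita}, obtained by deleting every $q$-dependent term and replacing the warped factor $\R^{q+1}$ by a single flat coordinate. The only point deserving a line of care is step (b) — verifying that, with $y(s)$ now a scalar function rather than a radial coordinate, one still gets $\nabla^{\varphi}_{\partial_s}d\varphi(\partial_s)=(\ddot x\,w,\ddot y)$ and hence $=\dot\alpha\,\eta$ via \eqref{angoloalfa-1} — but this is immediate since the second slot of $\R^n$ is a literal Euclidean line and differentiating $(\,\dot x\,w,\dot y\,)$ along $\partial_s$ contributes $(\ddot x\,w,\ddot y)$ plus the $\s^p$-tangential piece $\dot x\cdot\dot x\,w$ which, after the projection built into $\nabla^{\varphi}$, recombines exactly as in \eqref{eq:calculation-tau-2}. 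Thus the proof is a direct transcription and can be written out in a few lines.
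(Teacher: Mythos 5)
Your computation is correct and is essentially the paper's own route: the paper treats this proposition as the $q=0$ specialization of Proposition~\ref{condizionedibiarmonicita}, i.e.\ the identical frame/tension-field/shape-operator/Laplacian computation with all $\s^q$-terms deleted, which is exactly what you carry out. One tiny imprecision in your step (b): since $\nabla^{\varphi}$ is the pullback of the flat Euclidean connection, differentiating $(\dot x\,w,\dot y)$ in $s$ (with $w$ fixed) gives literally $(\ddot x\,w,\ddot y)=\dot\alpha\,\eta$ with no ``projection'' and no extra tangential piece to cancel --- the cancellation you allude to only arises in the paper's rewriting via $Z_1$, so your conclusion stands unchanged.
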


\begin{remark}
We first point out that the statement in Proposition~\ref{condizionedibiarmonicita-1} are those of Proposition~\ref{condizionedibiarmonicita} with $q=0$.
Moreover,  the immersion \eqref{equivariantimmersion-1} has at most two different principal curvatures, namely
$$
\overbrace{\frac{\dot{y}}{x} \cdots \frac{\dot{y}}{x}}^{p-\text{times}}\;\; \,\,\dot{\alpha}\,.
$$
Since a biharmonic hypersurface with at most two different principal curvatures in $\R^n$ is minimal (see \cite{dimitric}), we conclude that there exists no proper biharmonic immersion of type \eqref{equivariantimmersion-1}.
\end{remark}

Now, we study proper biconservative hypersurfaces of type \eqref{equivariantimmersion-1}: that is, according to
\eqref{bitensionecomponentetangente-1}, we look for nonconstant functions $f$ such that
\begin{equation}\label{eq-biconservative-proper-1}
    f+2\,\dot{\alpha} =3\,\dot{\alpha}+p\,\frac{\sin\alpha}{x}=0\,\,.
\end{equation}
In this case, the analysis of the qualitative behaviour of solutions of \eqref{eq-biconservative-proper-1} is facilitated by the existence of the following prime integral (the proof is just a direct computation):
\begin{lemma}\label{monotoniadiIeJ-1} Let
\begin{equation}\label{definizionediIeJ-1}
    J=x^{(p \slash 3)}\, \sin \alpha \,\, .
\end{equation}
Then $J$  is constant along any solution of \eqref{eq-biconservative-proper-1}.
\end{lemma}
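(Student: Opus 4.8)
The plan is to show that $J = x^{p/3}\sin\alpha$ is constant along any solution of \eqref{eq-biconservative-proper-1} by a direct differentiation, exactly in the spirit of Lemma \ref{monotoniadiIeJ}, but now the sign that gave monotonicity there will collapse to zero because the $q$-term is absent. First I would differentiate with respect to the arc-length parameter $s$, using \eqref{angoloalfa-1}:
\[
\dot J = \frac{p}{3}\, x^{(p/3)-1}\,\dot x\,\sin\alpha + x^{p/3}\,\cos\alpha\,\dot\alpha
= x^{p/3}\left(\frac{p}{3}\,\frac{\dot x\,\sin\alpha}{x} + \cos\alpha\,\dot\alpha\right).
\]
Now I would invoke \eqref{eq-biconservative-proper-1} in the form $\dot\alpha = -\,\dfrac{p}{3}\,\dfrac{\sin\alpha}{x}$ and substitute it into the second summand, together with $\dot x = \cos\alpha$, obtaining
\[
\dot J = x^{p/3}\left(\frac{p}{3}\,\frac{\cos\alpha\,\sin\alpha}{x} - \frac{p}{3}\,\frac{\cos\alpha\,\sin\alpha}{x}\right) = 0.
\]
Hence $J$ is constant along the solution, which is exactly the claim.

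There is essentially no obstacle here: the computation is a two-line verification, and the statement itself already flags this ("the proof is just a direct computation"). The only point worth a word of care is that one must make sure the substitution of $\dot\alpha$ from \eqref{eq-biconservative-proper-1} is legitimate along the whole solution, i.e. that $x>0$ so the equation makes sense — but this is guaranteed by the standing assumption in \eqref{equivariantimmersion-1} that $x(s)$ is a positive function. One could alternatively present the argument without the polar variable $\alpha$ by writing $J = x^{p/3}\,\dot y$ via $\dot y = \sin\alpha$ and using $\ddot y = \dot\alpha\,\dot x$ (from \eqref{angoloalfa-1}) together with the relation $f + 2\dot\alpha = 0$ rewritten as $3(\ddot y\,\dot x - \ddot x\,\dot y) + p\,\dot y/x = 0$ and multiplied by $\dot x$; but the polar computation above is the cleanest, so that is the one I would write out.

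Thus the proof reduces to displaying the identity $\dot J \equiv 0$, and I would simply record the one-line chain of equalities, noting the use of \eqref{eq-biconservative-proper-1} and \eqref{angoloalfa-1}, and conclude. No further structure is needed.
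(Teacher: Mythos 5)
Your computation is correct and is exactly the ``direct computation'' the paper refers to (it is the $q=0$ specialization of the calculation of $\dot J$ in Lemma~\ref{monotoniadiIeJ}, where the nonnegative term now vanishes). Nothing further is needed.
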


\begin{remark}\label{pro-reflecting-solution}
Let $(x(s),y(s),\alpha(s))$ be a solution of \eqref{eq-biconservative-proper-1} defined for $s\in I$. Then
it is easy to check the following properties of the solution:
\begin{enumerate}
\item the reflection across a horizontal line $y=y_0$, that is $(x(s),2y_0-y(s),-\alpha(s))$,  remains a solution (defined on $I$);
\item $(x(s+d),y(s+d),\alpha(s+d))$ remains a solution;
\item when $I=(-\varepsilon, \varepsilon)$, then $(x(-s),y(-s),\alpha(-s)+\pi)$ remains a solution.
\end{enumerate}

\end{remark}

Using the prime integral $J$, we can prove the main result of this section which can be stated as follows:

\begin{theorem}\label{teo-revolution-catenary}
There exists an infinite family of complete, proper $SO(p+1)$-invariant biconservative immersions in $\R^n$ ($n=p+2$) of type \eqref{equivariantimmersion-1}.
Their corresponding profile curves $\gamma(s)$ are of ``{catenary}'' type.
\end{theorem}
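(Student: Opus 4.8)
The plan is to reduce equation~\eqref{eq-biconservative-proper-1} to an explicitly integrable ODE by exploiting the prime integral $J$ of Lemma~\ref{monotoniadiIeJ-1}, and then to read off both the ``catenary'' shape of the profile curve and the completeness of the resulting hypersurface. First I would fix a solution with constant $J = x^{p/3}\sin\alpha = c$. If $c=0$ then $\sin\alpha\equiv 0$, so $\dot\alpha=0$ and $f\equiv 0$, giving a minimal (hence non-proper) hyperplane through the $y$-axis; thus properness forces $c\neq 0$, and without loss of generality (using the reflection in item~(1) of Remark~\ref{pro-reflecting-solution}) we may take $c>0$, so that $\sin\alpha>0$ and $x>0$ throughout. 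Writing $\sin\alpha = c\,x^{-p/3}$, and using \eqref{angoloalfa-1} in the form $\dot x = \cos\alpha$, $\dot y = \sin\alpha$, the profile curve can be reparametrized by $x$: along the solution $dy/dx = \tan\alpha = \sin\alpha/\sqrt{1-\sin^2\alpha}$, which yields the separated first-order equation
\begin{equation*}
\frac{dy}{dx} = \frac{c\,x^{-p/3}}{\sqrt{1 - c^2\,x^{-2p/3}}}\,.
\end{equation*}
This is the key identity: it expresses $y$ as an explicit quadrature in $x$ and shows the profile is a genuine one-parameter family (parametrized by $c$), establishing the ``infinite family'' claim and, up to the choice of terminology, the ``catenary type'' assertion (for $p=1$ one can integrate this in closed form to recover a classical curve).

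Next I would analyze the domain of $x$. Since we need $1 - c^2 x^{-2p/3} \geq 0$, the curve is confined to $x \geq x_{\min} := c^{3/p}$, with equality exactly when $\sin\alpha = 1$, i.e.\ $\alpha = \pi/2$: this is the point where the curve has a horizontal tangent in the $(x,y)$-plane (a ``vertex'' of the catenary-like profile, the analogue of the lowest point of a catenary). At that vertex $\gamma$ meets the regular part of the orbit space transversally to the direction of $x$, so the solution extends smoothly through it; by the reflection symmetry~(3) of Remark~\ref{pro-reflecting-solution} the solution continues to the other branch with $x$ increasing again and $\alpha$ decreasing past $\pi/2$. As $x\to+\infty$ on either branch, $\sin\alpha = cx^{-p/3}\to 0$, so $\alpha\to 0$ and the curve becomes asymptotically horizontal; one checks from the quadrature that the arclength to infinity diverges (because $\dot x = \cos\alpha \to 1$, the parameter $s$ ranges over all of $\R$), so the profile curve is a complete curve in $Q$ that stays in the open region $x>0$ and never touches the $y$-axis.

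Finally I would translate completeness of the profile into completeness of the hypersurface. Because $\gamma$ never reaches $x=0$, every orbit $\s^p$ over a point of $\gamma$ is a regular (top-dimensional) orbit, so $M = \s^p\times\R$ with the metric \eqref{inducedmetric-1}, $g = x^2(s)g_{\s^p} + ds^2$, and since $x(s)$ is bounded below by $x_{\min}>0$ and $s$ ranges over all of $\R$, this metric is complete (any divergent path must have infinite $s$-length or wind infinitely in the $\s^p$ factor, which has finite diameter). This gives an immersed complete $SO(p+1)$-invariant biconservative hypersurface, and since $f = -2\dot\alpha$ is not identically zero (indeed $\dot\alpha\neq 0$ wherever $\cos\alpha\neq 0$, as can be seen by differentiating $\sin\alpha = cx^{-p/3}$), it is proper. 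The main obstacle I anticipate is the bookkeeping at the vertex $\alpha=\pi/2$: one must verify carefully that the solution of \eqref{eq-biconservative-proper-1}, which near that point is singular in the $x$-parametrization, extends as a smooth solution in the arclength parameter $s$ and that the two branches glue into a single smooth complete curve — this is where the reflection symmetries of Remark~\ref{pro-reflecting-solution} do the real work, and where one should double-check that no spurious singularity of the hypersurface is introduced.
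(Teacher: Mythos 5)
Your proposal follows essentially the same route as the paper: the prime integral $J=x^{p/3}\sin\alpha=c$ of Lemma~\ref{monotoniadiIeJ-1} reduces \eqref{eq-biconservative-proper-1} to the quadrature for $dy/dx$, the branch $y=y(x)$ lives on $x>c^{3/p}$ and has infinite length, and the two branches are glued through the vertex by the symmetries of Remark~\ref{pro-reflecting-solution} together with uniqueness for the initial data $(c^{3/p},y_0,\alpha=\pi/2)$, so that $s$ ranges over all of $\R$ and the induced warped-product metric is complete. Two small slips: at $x=c^{3/p}$ one has $\alpha=\pi/2$, so the tangent there is parallel to the $y$-axis (your own formula gives $dy/dx\to+\infty$), not horizontal; and properness requires $f$ non-constant rather than merely $f\not\equiv 0$, which follows at once since $f=-2\dot\alpha=\tfrac{2p}{3}\,c\,x^{-(p+3)/3}$ and $x$ is non-constant along $\gamma$.
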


\begin{proof}
Let $\gamma(s)=(x(s),y(s))\,,\,s\in(a,b),$ be a solution of  \eqref{eq-biconservative-proper-1} with $x(s)>0$. Then, from Lemma~\ref{monotoniadiIeJ-1}, along $\gamma$
we have
$$
J=x^{(p \slash 3)}\, \sin\alpha=C=\cst\,.
$$
If $C=0$ then we must have $\sin\alpha=\dot{y}=0$ and this would imply that the solution has $f=\cst$. Thus we can assume that $C>0$.
Then we have $\sin\alpha=C\, x^{-p/3}$ and $\cos\alpha=\sqrt{1-C^2\, x^{-2p/3}}$, so that
\begin{equation}\label{eq-integralyx}
\frac{dy}{dx}=\frac{C\, x^{p/3}}{\sqrt{x^{2p/3}-C^2}}\,\,.
\end{equation}
From  \eqref{eq-integralyx} we obtain a local solution $y=y(x)$ of \eqref{eq-biconservative-proper-1} defined for $x\in(\sqrt[p]{C^3},\,+\infty)$ and when
$x$ tends to $\sqrt[p]{C^3}$ the curve becomes parallel to the $y$-axes. Moreover, $dy/dx>0$, which means that $y(x)$ is strictly increasing and, finally,
$\lim_{x\to +\infty} dy/dx=C$ (see Figure~\ref{fir-rotational1} (a)). Since the length of the curve $(x,y(x))$ is infinite, its reparametrization by
arc length is defined on $(s_0,\,+\infty)$.
Therefore, according to Remark~\ref{pro-reflecting-solution}, we can consider the solution $\tilde{\gamma}$ of \eqref{eq-biconservative-proper-1} defined on $(-\infty, +\infty)\setminus\{0\}$.
By uniqueness (considering the solution $\gamma$ of \eqref{eq-biconservative-proper-1} determined by the initial conditions $(x_0=\sqrt[p]{C^3},y_0,\alpha_0=\pi/2)$) we can extend $\tilde{\gamma}$ to a solution defined in $(-\infty,\,+ \infty)$. 
These curves are of ``catenary'' type, as shown in Figure~\ref{fir-rotational1} (b).

\end{proof}

\begin{figure}[!htcb]
 \begin{center}
 
\psset{unit=1cm,linewidth=.3mm,arrowscale=2}
\begin{pspicture}(0,-2)(4,4)
\psline{->}(0,0)(4,0)
\psline{->}(0,-2)(0,4)
\uput[-90](4,0){$x$}
\uput[180](0,4){$y$}

\psdot[dotscale=1](1,1)

\pscurve(1., 1.) (1.03141, 1.25)(1.12763, 1.5)(1.29468, 1.75)(1.54308,
   2.)(1.88842, 2.25) (2.35241, 2.5) (2.96419, 2.75) (3.7622,
  3.)

\psline[linestyle=dashed,linewidth=.16mm](1,0)(1,1)
\psline[linestyle=dashed,linewidth=.16mm](0,1)(1,1)

\uput[270](1,0){$\sqrt[p]{C^3}$}
\uput[180](0,1){$y_0$}

\uput[270](2,-2){(a)}

\end{pspicture}\hspace{20mm}
\begin{pspicture}(0,-2)(4,4)
\psline{->}(0,0)(4,0)
\psline{->}(0,-2)(0,4)
\uput[-90](4,0){$x$}
\uput[180](0,4){$y$}

\psdot[dotscale=1](1,1)

\pscurve(1., 1.) (1.03141, 1.25)(1.12763, 1.5)(1.29468, 1.75)(1.54308,
   2.)(1.88842, 2.25) (2.35241, 2.5) (2.96419, 2.75) (3.7622, 3.)

 \pscurve[linestyle=dashed,linewidth=.16mm](3.7622, -1.) (2.96419, -0.75) (2.35241, -0.5) (1.88842, -0.25)
(1.54308, 0.) (1.29468, 0.25) (1.12763, 0.5) (1.03141, 0.75) (1., 1.)

\psline[linestyle=dashed,linewidth=.16mm](1,0)(1,1)
\psline[linestyle=dashed,linewidth=.16mm](0,1)(1,1)
\psline[linestyle=dashed,linewidth=.16mm](0,1)(4,1)
\uput[270](1,0){$\sqrt[p]{C^3}$}
\uput[180](0,1){$y_0$}

\uput[270](2,-2){(b)}
\end{pspicture}

  \end{center}
  \caption{}\label{fir-rotational1}
\end{figure}

\end{document}